\newcommand{\later}[1]{}
\newcommand{\old}[1]{}
\newtheorem{theorem}{Theorem}
\newtheorem{lemma}[theorem]{Lemma}
\newtheorem{claim}[theorem]{Claim}
\newtheorem{proposition}[theorem]{Proposition}
\newtheorem{corollary}[theorem]{Corollary}
\newtheorem{question}[theorem]{Question}
\newtheorem{conjecture}[theorem]{Conjecture}
\newcommand{\eg}{{e.g.}}
\newcommand{\eps}{\varepsilon}
\newcommand{\ZZ}{\mathbb{Z}} %  set of integer numbers
\tikzstyle{vtx} = [circle, fill, inner sep=0.7]
\title{Subset selection problems in planar point sets}
\author{J\'ozsef Balogh}
\address{J\'ozsef Balogh \newline University of Illinois at Urbana-Champaign, USA}
\email{jobal@illinois.edu}
\author{Felix Christian Clemen}
\address{Felix Christian Clemen \newline Institute for Basic Science, South Korea}
\email{felix.clemen@ibs.re.kr}
\author{Adrian Dumitrescu}
\address{Adrian Dumitrescu \newline Algoresearch L.L.C., Milwaukee, WI, USA}
\email{ad.dumitrescu@algoresearch.org}
\author{Dingyuan Liu}
\address{Dingyuan Liu \newline Karlsruhe Institute of Technology, Germany}
\email{liu@mathe.berlin}
\begin{document}

\maketitle

\begin{abstract}
Given a finite point set satisfying condition $\mathcal{A}$, the subset selection problem asks, how large of a subset satisfying condition $\mathcal{B}$ can we find? We make progress on three instances of subset selection problems in planar point sets. Let $n,s\in\mathbb{N}$ with $n\geq s$, and let $P\subseteq\mathbb{R}^2$ be a set of $n$ points, where at most $s$ points lie on the same line.

Firstly, we select a general position subset of $P$, i.e., a subset containing no $3$ points on the same line. This problem was proposed by Erd\H{o}s under the regime when $s$ is a constant. For $s$ being non-constant, we give new lower and upper bounds on the maximum size of such a subset. In particular, we show that in the worst case such a set can have size at most $O(n/s)$ when $n^{1/3}\leq s\leq n$ and $O(n^{5/6+o(1)}/\sqrt{s})$ when $3\leq s\leq n^{1/3}$.

Secondly, we select a monotone general position subset of $P$, that is, a subset in general position where the points are ordered from left to right and their $y$-coordinates are either non-decreasing or non-increasing. We present bounds on the maximum size of such a subset. In particular, when $s=\Theta(\sqrt{n})$, our upper and lower bounds differ only by a logarithmic factor.

Lastly, we select a subset of $P$ with pairwise distinct slopes. This problem was initially studied by Erd\H{o}s, Graham, Ruzsa, and Taylor on the grid. We show that for $s=O(\sqrt{n})$ such a subset of size $\Omega((n/\log{s})^{1/3})$ can always be found in $P$. When $s=\Theta(\sqrt{n})$, this matches a lower bound given by Zhang on the grid. As for the upper bound, we show that in the worst case such a subset has size at most $O(\sqrt{n})$ for $2\leq s\leq n^{3/8}$ and $O((n/s)^{4/5})$ for $n^{3/8}\leq s=O(\sqrt{n})$.

The proofs use a wide range of tools such as incidence geometry, probabilistic methods, the hypergraph container method, and additive combinatorics.
\end{abstract}

\section{Introduction} \label{sec:intro}

For an integer $k\geq3$, a \emph{collinear $k$-tuple} is a set of $k$ distinct points lying on the same line.
A planar point set is in \emph{general position} if it contains no collinear triple. The problem of selecting a
large general position subset from an $n \times n$ integer grid, also known as the \emph{no-three-in-line problem}, goes back more than $100$ years, see for examples~\cite{Dud1917} and~\cite[Ch.~10]{BMP05}. A pigeonhole principle
argument shows that one can select at most $2n$ points. Dudeney~\cite{Dud1917}
asked whether one can always select $2n$ points and this question remains open.

The no-three-in-line problem is a typical instance of the \emph{subset selection problem}. In general, the subset selection problem asks: given a finite point set satisfying condition $\mathcal{A}$, how large of a subset satisfying
condition $\mathcal{B}$ can we find? In this paper, we focus on three of such
problems that have received considerable attention in extremal combinatorics and
discrete geometry.

\subsection{Selecting a general position subset}

A natural generalization of the no-three-in-line problem is to select a
large general position subset from an arbitrary planar point set satisfying 
a certain collinearity condition. Let $n,s\in\mathbb{N}$ with $n\geq{s}\geq2$. The
function $f(n,s)$ is defined as the largest integer, such that every set
$P\subseteq\mathbb{R}^{2}$ of $n$ points with at most $s$ points collinear,
i.e., without collinear $(s+1)$-tuples, contains a general position subset of
size $f(n,s)$. It follows from the definition that
$f(n,2)=n$. Erd\H{o}s~\cite{Erd1986} studied the first non-trivial case $f(n,3)$
and showed that $f(n,3)=\Omega(\sqrt{n})$ by a greedy selection. Later
F\"{u}redi~\cite{Fur1991} proved that $f(n,3)=\Omega(\sqrt{n\log{n}})$ using the
lower bound on the independence number of partial Steiner triple
systems~\cite{PR1986}. Employing a similar idea, Lefmann~\cite{Lef10}
generalized F\"{u}redi's lower bound to all $f(n,s)$ with fixed $s\geq3$.

On the other hand, F\"{u}redi~\cite{Fur1991} constructed a set
$P\subseteq\mathbb{R}^{2}$ of $n$ points containing no collinear $4$-tuple and
used the density Hales-Jewett theorem~\cite{FK1989} to show that $P$ contains at
most $o(n)$ points in general position, namely, $f(n,3)=o(n)$. Balogh and
Solymosi~\cite{BS18} proved that a random $n$-element subset of a large
$3$-dimensional grid contains no collinear $4$-tuple and at most $n^{5/6+o(1)}$
points in general position with high probability. After projecting this subset
into the plane they obtained a better upper bound construction, implying that
$f(n,3)\leq{n^{5/6+o(1)}}$. Note that $f(n,s)$ is monotone in $s$, namely,
$f(n,s)\leq{f(n,s')}$ holds for all $s\geq{s'}$. Thus the aforementioned
results\footnote{Further discussion on these aspects can be found
  in~\cite[Ch.~9]{Epp18}.} established that 
\begin{equation}
\label{gap}
\Omega(\sqrt{n\log{n}})=f(n,s)\leq{f(n,3)}\leq{n^{5/6+o(1)}}\quad\text{for any fixed $s\geq3$}.
\end{equation}

Payne and Wood~\cite{PW13} were the first to study the function $f(n,s)$ when $s$ is a function of $n$. They showed that
\begin{equation}
\label{pay}
f(n,s)=
\begin{cases}
\Omega(\sqrt{n\log_{s}{n}}) & \text{for $s\leq{n^{1/2-c}}$ with any fixed constant $c>0$,}\\
\noalign{\vskip9pt}
\Omega(\sqrt{n/\log{n}}) & \text{for $n^{1/2-o(1)}\leq{s}\ll\sqrt{n\log{n}}$,}\\
\noalign{\vskip9pt}
\Omega\left(n/s\right) & \text{for $s=\Omega(\sqrt{n\log{n}})$.}
\end{cases}
\end{equation}

%By the monotonicity of $f(n,s)$, the result of Balogh and Solymosi~\cite{BS18} also implies that $f(n,s)\leq n^{5/6+o(1)}$ for all $3\leq s \leq n$.%
We present new upper bounds on $f(n,s)$ in the non-constant regime of
$s$. Compared to the lower bounds in~\eqref{pay}, our upper bound on $f(n,s)$
has the correct exponent when $s\geq n^{1/2-o(1)}$. 
\begin{theorem}
\label{law}
We have
\begin{equation}
f(n,s)= 
\begin{cases}
O(n/s) & \text{for } n^{1/3} \leq s \leq n, \\
\noalign{\vskip9pt}
O(n^{5/6+o(1)}/\sqrt{s}) & \text{for } 3 \leq s \leq n^{1/3}.
\end{cases}
\end{equation}
In particular, $f(n,s)= \Theta(n/s)$ when $s=\Omega(\sqrt{n\log{n}})$.
%\begin{equation}
%f(n,s)= 
%\begin{cases}
%\Theta((n/s)^{1+o(1)}) & \text{for } n^{1/2-o(1)}\leq{s}\ll\sqrt{n\log{n}}, \\
%\noalign{\vskip9pt}
%\Theta(n/s) & \text{for } s=\Omega(\sqrt{n\log{n}}).
%\end{cases}
%\end{equation}
\end{theorem}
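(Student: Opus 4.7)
The plan is to establish Theorem~\ref{law} via explicit constructions in two regimes.

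\emph{Case 1: $n^{1/3} \leq s \leq n$.} I would use direct constructions. For $\sqrt{n} \leq s \leq n$, place $\lceil n/s \rceil$ lines in $\mathbb{R}^2$ in general position (pairwise non-parallel, no three concurrent) and distribute $n$ points on them, $s$ per line, avoiding the pairwise intersections of the lines. Any other line meets the chosen ones in at most $\lceil n/s \rceil \leq s$ points, so the $s$-collinear constraint holds; any general-position subset uses at most two points per chosen line, giving $O(n/s)$. For $n^{1/3} \leq s \leq \sqrt{n}$, take the integer grid $[s] \times [s] \times [\lceil n/s^2 \rceil] \subset \mathbb{R}^3$ and project generically to $\mathbb{R}^2$ (which preserves both collinearity and non-collinearity). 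Since $\lceil n/s^2 \rceil \leq s$ throughout this range, every line in the grid contains at most $s$ points; any general-position subset uses $\leq 2$ points per of the $s \cdot \lceil n/s^2 \rceil = \Theta(n/s)$ lines parallel to the first axis, again yielding $O(n/s)$.

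\emph{Case 2: $3 \leq s \leq n^{1/3}$.} I would adapt the Balogh--Solymosi random $3$D-grid construction~\cite{BS18}. Choose $m$ so that $n = s \cdot m^{2 - 2/(s+1)}$ (so $m$ interpolates between $m \approx n^{2/3}$ at $s = 3$ and $m \approx n^{1/3}$ at $s = n^{1/3}$), set $p := n/m^3$, and sample $R \subseteq [m]^3$ by including each grid point independently with probability $p$; then project $R$ generically to $\mathbb{R}^2$. Two routine steps: Chernoff gives $|R| = (1+o(1))n$ with high probability; and the expected number of $(s+1)$-collinear tuples is bounded by $\sum_L \binom{\ell_L}{s+1} p^{s+1}$ over lines $L \subseteq [m]^3$, dominated by the $3m^2$ axis-parallel lines of length $m$ and contributing $O(m^{s+3} p^{s+1}) = o(1)$ by the choice of $m$, so $R$ has at most $s$ collinear whp.

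The main step, and the principal obstacle, is to show with high probability that every general-position subset of $R$ (equivalently, every subset with no three collinear in $\mathbb{R}^3$) has size at most $n^{5/6+o(1)}/\sqrt{s}$. My plan is to apply the hypergraph container method (Balogh--Morris--Samotij; Saxton--Thomason) to the $3$-uniform collinearity hypergraph on $[m]^3$, whose co-degree conditions follow from elementary incidence geometry (a pair of grid points determines a unique line, so the third point of a collinear triple is highly constrained). The theorem yields a small family of containers of controlled size $\Lambda$ covering every independent set; combining a union bound over containers with Chernoff-type tail estimates on $|C \cap R|$ for each container $C$ then bounds the largest general-position subset in $R$. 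The delicate point is balancing $\Lambda$, the number of containers, and the density $p$---itself fixed by $m$ through the $(s+1)$-collinear constraint---so that $p\Lambda \leq n^{5/6+o(1)}/\sqrt{s}$ while the union bound converges; tracking the exponents carefully should recover the Balogh--Solymosi bound at $s=3$ and smoothly meet the $O(n/s)$ bound of Case~1 at $s = n^{1/3}$.
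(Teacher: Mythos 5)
Your Case 1 is correct and is a genuinely different (and arguably cleaner) route than the paper's: the paper obtains $O(n/s)$ for $n^{1/3}\leq s\leq n^{1/2}$ by taking a random subset of $[n]^2$ and applying Chernoff plus a union bound over lines, and uses the grid $[s]\times[n/s]$ for $s\geq n^{1/2}$; your deterministic constructions (points distributed on $\lceil n/s\rceil$ lines in general position, and the generically projected box $[s]\times[s]\times[\lceil n/s^2\rceil]$, whose lines indeed carry at most $\max(s,\lceil n/s^2\rceil)=s$ grid points) achieve the same bounds with no probability at all. Both arguments reduce to ``at most two points per line of a covering family of $\Theta(n/s)$ lines,'' so nothing is lost.

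Case 2 has a genuine gap at exactly the point the paper identifies as its main novelty. You propose to apply the container lemma directly to the collinearity hypergraph $\mathcal{H}_S$ on a subset $S$ of the grid, asserting that ``the co-degree conditions follow from elementary incidence geometry'' because a pair of points determines a unique line. That determines the line, but not the third point: a pair lying on a line containing $k$ points of $S$ has co-degree $k-2$, so $\Delta_2(\mathcal{H}_S)$ can be as large as the grid side length (e.g.\ for axis-parallel pairs), and at the later iterations of the container process --- where $S$ is a container of size $m^{3-x}$ with $x$ close to $1$ and the guaranteed average degree has dropped to roughly $m^{3-3x}$ --- the Saxton--Thomason condition $\Delta_2/(d\tau)\leq\eps$ would force $\tau\gg m^{3x-2}$, which is impossible for $\tau<1$. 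This is why the paper proves Lemma~\ref{arm} (balanced supersaturation): for every large $S\subseteq[n]^3$ it constructs a \emph{spanning subhypergraph} $\mathcal{H}'\subseteq\mathcal{H}_S$ by placing vertex-disjoint matchings of triples along a carefully chosen family of lines with prime directions in a dyadic range, so that $\Delta_2(\mathcal{H}')\leq 1$ and $\Delta(\mathcal{H}')$ is controlled while $e(\mathcal{H}')$ remains within a $\log$ factor of optimal. (This lemma simultaneously supplies the supersaturation lower bound on $e(\mathcal{H}')$ that you need to conclude the containers actually shrink at each iteration --- another ingredient absent from your sketch.) Without this sparsification your union-bound bookkeeping cannot produce the exponent $5/6-\beta/2$; you would at best recover the unimproved Balogh--Solymosi bound, and for subsets $S$ arising mid-iteration the container lemma's hypotheses simply fail. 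Your parametrization $n=s\cdot m^{2-2/(s+1)}$ and the $(s+1)$-tuple calculation are fine, but they are the routine part.
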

Our proof of Theorem~\ref{law} in the case $s\leq n^{1/3}$ follows the approach of Balogh and Solymosi~\cite{BS18} on the upper bound on $f(n,3)$, using the hypergraph container method. This method has been widely employed in geometric settings, see, e.g.,~\cite{CLNZ23,RN24,RN25,SZ23}. The key novelty in our proof is a balanced supersaturation result, which enables us to obtain better exponents. 

A result of Hajnal and Szemer\'{e}di~\cite[Theorem 3]{HS18} yields an improved lower bound $f(n,s)=\Omega\left(\sqrt{n\log\log{n}/\log{n}}\right)$ for $n^{1/2-o(1)}\leq s=O\left(\sqrt{n\log\log{n}/\log{n}}\right)$. Here we extend the range of $s$ where a similar lower bound holds.
\begin{proposition}
\label{new}
Let $n,s\in\mathbb{N}$ with $s\ll\sqrt{n\log{n}}$. Then
\begin{equation}
\label{oak}
f(n,s)=\Omega\left(\sqrt{\frac{n\log{\frac{n\log{s}}{s^2}}}{\log{s}}}\right).
\end{equation}
In particular, for $n^{1/2-o(1)}\leq s = O(\sqrt{n\log\log{n}})$ we have $f(n,s)=\Omega\left(\sqrt{\frac{n\log\log{n}}{\log{n}}}\right)$.
\end{proposition}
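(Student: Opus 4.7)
The plan is to apply an AKPSS-type independence bound to the $3$-uniform hypergraph $H$ with vertex set $P$ whose edges are the collinear triples of $P$. I observe first that since every pair of points of $P$ lies on a unique line containing at most $s$ points, the maximum pair codegree of $H$ is at most $s-2$. The main novel step is to use the Szemer\'edi--Trotter incidence theorem to bound the average degree of $H$ much more sharply than what the naive identity $\sum_{\ell}\binom{k_\ell}{2}=\binom{n}{2}$ yields.

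By Szemer\'edi--Trotter, the number $t_{\geq k}$ of lines of $P$ containing at least $k$ points satisfies $t_{\geq k}=O(n^2/k^3+n/k)$. Applying summation by parts (using the identity $\binom{k}{3}-\binom{k-1}{3}=\binom{k-1}{2}$), I obtain
\[
|E(H)|=\sum_{\ell}\binom{k_\ell}{3}=\sum_{k\geq 3}\binom{k-1}{2}\,t_{\geq k}\leq O\!\left(\sum_{k=3}^{s}\frac{k^2}{2}\left(\frac{n^2}{k^3}+\frac{n}{k}\right)\right)=O(n^2\log s+ns^2).
\]
Under the hypothesis $s\ll\sqrt{n\log n}$ one checks that $s^2=o(n\log s)$ (splitting into the cases $\log s=\Theta(\log n)$ and $\log s=o(\log n)$), so $|E(H)|=O(n^2\log s)$ and therefore the average degree of $H$ is $d:=3|E(H)|/n=O(n\log s)$. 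This is a substantial improvement over the naive $d=O(sn)$, and it is precisely what will put $\log s$ (rather than $s$) into the denominator of the final expression.

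Next, I would invoke an AKPSS-type bound for $3$-uniform hypergraphs with bounded codegree, along the lines of \cite[Theorem~3]{HS18}: for a $3$-uniform hypergraph on $n$ vertices with average degree $d$ and pair codegree at most $K$ with $K\leq c\sqrt{d}$, one has
\[
\alpha(H)=\Omega\!\left(n\sqrt{\frac{\log(d/K^2)}{d}}\right).
\]
Plugging in $d=O(n\log s)$ and $K=s-2$ (the codegree condition $K\leq c\sqrt{d}$ reduces to $s^2\lesssim n\log s$, which is ensured by $s\ll\sqrt{n\log n}$), one obtains
\[
\alpha(H)=\Omega\!\left(n\sqrt{\frac{\log(n\log s/s^2)}{n\log s}}\right)=\Omega\!\left(\sqrt{\frac{n\log(n\log s/s^2)}{\log s}}\right),
\]
which is exactly~\eqref{oak}; the ``in particular'' case $n^{1/2-o(1)}\leq s=O(\sqrt{n\log\log n})$ then follows by direct substitution. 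The main obstacle will be to pinpoint and cite a form of the AKPSS/Spencer bound that is valid throughout the whole range $s\ll\sqrt{n\log n}$, and, if no off-the-shelf version suffices, to adapt the argument of \cite[Theorem~3]{HS18} to produce the bounded-codegree form above; the incidence-geometric input (Szemer\'edi--Trotter and the dyadic summation) is otherwise routine.
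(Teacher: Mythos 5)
Your strategy is the same as the paper's: form the $3$-uniform hypergraph of collinear triples, bound its edge count by $O(n^2\log s+ns^2)$ via Szemer\'edi--Trotter (this is exactly Lemma~\ref{low}, quoted from Payne--Wood), note $\Delta_2\le s-2$, reduce average degree to maximum degree by keeping the half of the points in the fewest triples, and feed everything into a semi-random-method bound. The problem is that the step you explicitly defer --- ``pinpoint and cite a form of the AKPSS/Spencer bound,'' or adapt~\cite{HS18} if none exists --- is where essentially all of the content of the proposition lives, so as written the proposal is a strategy outline with the central lemma missing. The off-the-shelf tool the paper uses is the Cooper--Mubayi chromatic-number theorem (Lemma~\ref{dry}): if $\Delta_2(\mathcal{H})\le\sqrt{\Delta}/\gamma$ with $\gamma>1$, then $\chi(\mathcal{H})=O(\sqrt{\Delta/\log\gamma})$, and one concludes via $\alpha\ge v/\chi$.

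Even granting such a bound, your application of it has a concrete gap: you ``plug in'' the upper bound $d=O(n\log s)$, but the quantity $\sqrt{\log(d/K^2)/d}$ is not monotone decreasing in $d$ (it tends to $0$ as $d\to K^2$), so a point set whose actual triple count is much smaller than $n^2\log s$ --- say with degree comparable to $s^2$ --- yields a worse or vacuous bound, and the hypothesis $K\le c\sqrt{d}$ can fail outright. The same issue appears in Lemma~\ref{dry}, which is stated with $\Delta$ equal to the \emph{actual} maximum degree. The paper repairs this with Lemma~\ref{eye}: an artificial $3$-uniform hypergraph with maximum degree $\Theta(n\log s+s^2)$ and $\Delta_2=O(s)$ is unioned with the collinear-triple hypergraph, forcing the maximum degree up to $\Theta(n\log s)$ without spoiling the codegree condition (adding edges only shrinks independent sets, so the conclusion transfers back). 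Alternatively you could split into cases, using the trivial deletion bound $\alpha=\Omega(n/\sqrt{d})$ when $d=O(s^2)$ and monotonicity of $\sqrt{\log(d/K^2)/d}$ for $d\ge eK^2$ otherwise, but some such device is required; without it the argument does not go through for point sets with few collinear triples.
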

 Note that our lower bound in~\eqref{oak} is consistent
with~\eqref{pay} when $s\leq{n^{1/2-c}}$ for any fixed constant $c>0$ and better when $n^{1/2-o(1)}\leq{s}\ll\sqrt{n\log{n}}$.

\subsection{Selecting a monotone general position subset}

According to a well-known theorem due to Erd\H{o}s and Szekeres~\cite{ES35}, for
any $\ell,m\geq1$, every sequence $(x_i)_{i=1}^{n}$ of real numbers with $n>\ell
m$ contains a non-decreasing subsequence of length larger than $\ell$, or a
non-increasing subsequence of length larger than $m$. For a point
$v\in\mathbb{R}^2$ denote by $v_x$ and $v_y$ the $x$- and $y$-coordinates of
$v$, respectively. We say that a point set $P\subseteq\mathbb{R}^2$ is
\emph{monotone} if one of the following holds: 
\begin{enumerate}[(i)]
\item $\forall\,u,v\in{P}:\,u_x<v_x \implies u_y \leq v_y$;
\item $\forall\,u,v\in{P}:\,u_x<v_x \implies u_y \geq v_y$.
\end{enumerate}
See Figure~\ref{fig:grid1} for an illustration of a monotone general position subset. A simple corollary of the Erd\H{o}s--Szekeres theorem states that any set
$P\subseteq\mathbb{R}^2$ of $n$ points contains a monotone subset of size at
least $\sqrt{n}$. Let $g(n,s)$ be the largest integer, such that any set
$P\subseteq\mathbb{R}^{2}$ of $n$ points with at most $s$ collinear points
contains a monotone general position subset of size $g(n,s)$. We are mainly
interested in the following question: 

\begin{question} \label{q:mon-gp}
Let $n,s\in\mathbb{N}$ with $s=O(\sqrt{n})$. What is the growth order of $g(n,s)$?
\end{question}

We give a partial answer to this question as follows.

\begin{theorem} \label{thm:mon-gp}
Let $n,s\in\mathbb{N}$ with $s=O(\sqrt{n})$. Then
\begin{equation}
g(n,s)=\Omega\left(\left(\frac{n\log{\frac{n\log{s}}{s^2}}}{\log{s}}\right)^{1/4}\right).
\end{equation}
On the other hand, we have $g(n,s)=O((n/s)^{1/2})$.
\end{theorem}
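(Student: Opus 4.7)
The plan for the lower bound is to compose Proposition~\ref{new} with the Erd\H{o}s--Szekeres theorem. Given $P\subseteq\mathbb{R}^{2}$ with $|P|=n$ and at most $s$ collinear points, Proposition~\ref{new} supplies a general-position subset $Q\subseteq P$ with
\[
|Q|=\Omega\!\left(\sqrt{\frac{n\log(n\log s/s^{2})}{\log s}}\right).
\]
After an infinitesimal perturbation (which preserves general position) I may assume that the x-coordinates of $Q$ are pairwise distinct; Erd\H{o}s--Szekeres applied to the resulting sequence of y-coordinates extracts a monotone subsequence of length at least $\sqrt{|Q|}$. Since general position is hereditary, this subsequence is a monotone general-position subset of $P$ of the required size.

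\textbf{Upper-bound construction.} For the matching upper bound the plan is to blow up the classical Erd\H{o}s--Szekeres extremal sequence into collinear clusters. Let $k=\lceil\sqrt{n/s}\rceil$. I start from $k^{2}$ points $q_{1},\ldots,q_{k^{2}}\in\mathbb{R}^{2}$ (indexed in x-order) with pairwise distinct x- and y-coordinates, no three collinear, and longest monotone subsequence of length exactly $k$, lightly perturbed from the classical extremal construction to remove accidental collinearities. Let $\delta>0$ be smaller than both the minimum pairwise x-gap and y-gap of the $q_{i}$. Partition $n$ into nonnegative integers $s_{1},\ldots,s_{k^{2}}\leq s$ summing to $n$. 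Through each $q_{i}$ draw a line $L_{i}$ of slope $\sigma_{i}$ and place $s_{i}$ points of a cluster $C_{i}\subseteq L_{i}$ inside the open disk of radius $\delta/10$ about $q_{i}$; the choice of $\delta$ automatically guarantees that the clusters are pairwise disjoint in both x- and y-projection. The slopes $\sigma_{i}$ and exact cluster positions will be chosen generically so that no line other than some $L_{i}$ contains three or more points of $\bigcup_{i}C_{i}$; this condition fails only on a finite union of proper algebraic subvarieties of the parameter space, so a generic choice suffices. The resulting set $P=\bigcup_{i}C_{i}$ has exactly $n$ points, and any line containing more than two points of $P$ coincides with some $L_{i}$ and hence contains at most $s$ points.

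\textbf{Upper-bound analysis.} Next I would show that every monotone general-position subset $M\subseteq P$ has size $O(\sqrt{n/s})$. Since $M$ is in general position and each $C_{i}$ lies on a line, $|M\cap C_{i}|\leq 2$ for every $i$. Let $\mathcal{I}=\{i:M\cap C_{i}\neq\emptyset\}$. The x-disjointness of clusters implies that whenever $i,j\in\mathcal{I}$ with $(q_{i})_{x}<(q_{j})_{x}$, every point of $M\cap C_{i}$ has strictly smaller x-coordinate than every point of $M\cap C_{j}$; monotonicity of $M$ then forces all y-values in $M\cap C_{i}$ to lie strictly below (or strictly above) those in $M\cap C_{j}$, with the same direction throughout $\mathcal{I}$. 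Combined with the y-disjointness of clusters and the choice of $\delta$, this yields a strict monotone relation on the sequence $((q_{i})_{y})_{i\in\mathcal{I}}$ in x-order. The extremal property of the $q_{i}$ then gives $|\mathcal{I}|\leq k$, so $|M|\leq 2|\mathcal{I}|\leq 2k=O(\sqrt{n/s})$.

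\textbf{Main obstacle.} The principal technical step is verifying the genericity condition (no spurious line with three or more cluster points) while fitting all $s_{i}$ points into the tiny segment about $q_{i}$; this requires a careful but essentially routine perturbation argument. Apart from that, the proof is a clean composition of Proposition~\ref{new}, the Erd\H{o}s--Szekeres theorem, and its extremal construction.
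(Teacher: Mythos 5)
Your proof is correct and follows essentially the same route as the paper: the lower bound composes Proposition~\ref{new} with the Erd\H{o}s--Szekeres theorem, and the upper bound blows up a base configuration whose longest monotone subset has size $O(\sqrt{n/s})$ into collinear clusters of at most $s$ points, then argues that a monotone general position subset takes at most two points per cluster and touches only clusters whose centers form a monotone subset of the base. The only (cosmetic) difference is that the paper uses a slightly perturbed $\sqrt{n/s}\times\sqrt{n/s}$ grid as the base configuration rather than the Erd\H{o}s--Szekeres extremal sequence.
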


It is worth noting that the gap between the bounds in Theorem~\ref{thm:mon-gp}
is large for small $s$ but almost vanishes when $s$ is large. In particular,
when $s=\Theta(\sqrt{n})$, the upper and lower bounds are $O(n^{1/4})$ and
$\Omega ((n \log \log{n}/\log{n})^{1/4})$, respectively.
Additionally, for the $n$ points in a $\sqrt{n} \times \sqrt{n}$ integer
grid\footnote{Throughout this paper, whenever we refer to the $\sqrt{n} \times
  \sqrt{n}$ grid 
we assume that $n$ is a perfect square. One could work with a $t \times n/t$ grid instead, where
$t=\Theta(\sqrt{n})$ and $t$ divides $n$, but the results would be very similar.}, we deduce better bounds.

\begin{theorem} \label{thm:mon-gp-grid}
Let $G$ be a $\sqrt{n} \times \sqrt{n}$ grid of $n$ points. 
Then the largest size of a monotone general position subset of $G$ is at least $\Omega\left((n/\log{n})^{2/5}\right)$ and at most $o(n^{1/2})$. 
\end{theorem}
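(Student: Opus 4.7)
We establish the two bounds separately.

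For the upper bound $o(n^{1/2})$, let $S$ be a monotone general position subset of $G$, which we may assume is strictly increasing (losing only a factor of $2$). Then $S$ is the graph of a strictly monotone function $f\colon X\to[\sqrt{n}]$ with $X\subseteq[\sqrt{n}]$ and $|X|=|S|$. Suppose for contradiction that $|S|\ge c\sqrt{n}$ for some fixed $c>0$. If $X$ is $3$-AP-free, then $|X|=o(\sqrt{n})$ already by the sharp Roth--Bloom--Sisask bound on $r_3([\sqrt{n}])$, a contradiction. Otherwise, by Szemer\'edi's theorem $X$ contains an arithmetic progression $a, a+d, \dots, a+(\ell-1)d$ of length $\ell\to\infty$ with $n$. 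Restricting $f$ to this AP yields a strictly monotone $g\colon [\ell]\to[\sqrt{n}]$ whose graph, viewed in the narrower grid $[\ell]\times[\sqrt{n}]$, is again in general position: collinearity of three points $(i,g(i)),(j,g(j)),(k,g(k))$ in the narrower grid is equivalent to collinearity of the corresponding three points of $S$. A density-increment or recursive application of the same question in this narrower grid then forces $\ell=o(\sqrt{n})$, contradicting $|S|\ge c\sqrt{n}$.

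For the lower bound $\Omega((n/\log n)^{2/5})$, the plan is to apply the hypergraph container method to the $3$-uniform hypergraph $\mathcal H$ on $V(\mathcal H)=G$ whose edges are the monotone collinear triples in $G$. The key parameters, derived from standard grid incidence estimates, are $|V(\mathcal H)|=n$, $e(\mathcal H)=\Theta(n^{2})$ (via $\sum_{\ell} \binom{k_\ell}{3}$), maximum degree $\Theta(n^{3/2})$, and pair co-degree at most $\sqrt{n}$ (since each line of $G$ contains at most $\sqrt{n}$ grid points). Combined with a \emph{balanced supersaturation} estimate tailored to monotone collinear triples---in the spirit of the supersaturation lemma that drives the proof of Theorem~\ref{law}---the container theorem of Balogh--Morris--Samotij and Saxton--Thomason yields a family of containers, within each of which a probabilistic monotone selection followed by a pruning step produces a monotone no-$3$-collinear subset of size $\Omega((n/\log n)^{2/5})$. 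The exponent $2/5$ arises from optimizing the selection density against the supersaturation bound.

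The main obstacle lies in the lower bound. A straightforward application of the container method produces general position subsets of the grid (of size $O(\sqrt{n})$, attained for instance by points on a parabola modulo a prime), but such subsets are typically not monotone, and a naive Erd\H os--Szekeres extraction would lose a square root and give only an $n^{1/4}$-type bound, falling short of $n^{2/5}$. To recover the $2/5$ exponent, one must tailor the container argument to the monotone setting, exploiting the arithmetic structure of the grid lines in the supersaturation estimate and carefully balancing the parameters so that the extracted independent set is both sufficiently large and can be ensured monotone.
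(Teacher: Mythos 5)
Both halves of your argument have genuine gaps. For the upper bound, the step after extracting an arithmetic progression from $X$ does not close: a $3$-AP $a,a+d,a+2d$ in the $x$-coordinates gives you nothing unless the corresponding $y$-values also satisfy $g(a+2d)-g(a+d)=g(a+d)-g(a)$; collinearity is a joint condition on both coordinates. After restricting to the progression you merely have a monotone general position set of size $\ell$ sitting in an $\ell\times\sqrt{n}$ grid, which is not a contradiction (such sets exist in abundance when the grid is that tall, and $\ell$ from Szemer\'edi's theorem may grow arbitrarily slowly), and the promised ``density-increment or recursive application'' is never specified and has no base case. The paper's upper bound instead rests on a theorem of Pomerance: monotonicity forces the polygonal path through the points, ordered by $x$-coordinate, to have total length at most $2\sqrt{n}$, so if $|S|\geq c\sqrt{n}$ the average step length is $O(1)$, and Pomerance's Szemer\'edi-type theorem for lattice point sequences of linearly bounded total length then produces a collinear triple. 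That quantitative ``short path forces collinearity'' input is the missing idea; Roth or Szemer\'edi applied to the $x$-coordinates alone cannot substitute for it.

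For the lower bound, you have not given a construction or an argument, only named a method, and the method points the wrong way: hypergraph containers are a tool for showing that independent sets are few or small (as in the proof of Theorem~\ref{law} part (ii)), not for exhibiting a large independent set inside a fixed host, and no balanced supersaturation statement will by itself produce a large \emph{monotone} general position subset --- your own closing paragraph concedes the monotonicity obstruction without resolving it. The paper's construction is elementary and entirely different: intersect the grid with a $30^{\circ}$ sector of a thin annulus of radial width $x=n^{1/10}(\log n)^{2/5}$ centered at the grid's center. Thinness keeps every line's intersection with the annulus short, which bounds the number of collinear triples in the annulus by $O(x^{5/2}n^{3/4}\log n)$; the narrow angular aperture bounds the number of descending pairs in the sector by $O(x^{3}\sqrt{n})$. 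A random subset of density $p=cn^{-1/5}(\log n)^{-4/5}$, followed by deleting one point from each surviving collinear triple and each surviving descending pair, then leaves $\Omega((n/\log n)^{2/5})$ points. The essential point you are missing is the choice of a host set in which both obstacle counts are simultaneously small; the full grid has far too many collinear triples and descending pairs for any deletion scheme to beat the $n^{1/3}$ convex-curve bound, let alone reach exponent $2/5$.
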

The construction obtaining the lower bound in Theorem~\ref{thm:mon-gp-grid} is a random subset taken from the intersection of the grid and a sector of an annulus with a carefully chosen radial width.  

\begin{figure}[htbp]
\centering
\includegraphics[scale=0.6]{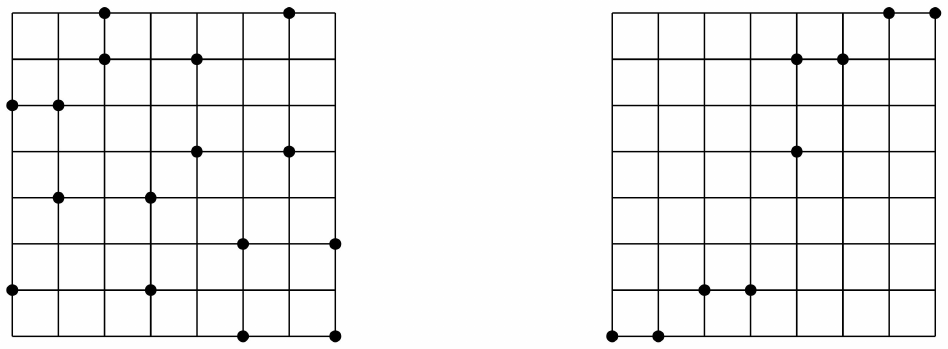}
\caption{Left: Dudeney's maximum-sized general position subset in the $8 \times 8$ grid.\\
Right: a monotone general position subset in the $8 \times 8$ grid.}
\vspace{-1em}
\label{fig:grid1}
\end{figure}

\subsection{Selecting a subset with pairwise distinct slopes}

A strengthening of the general position requirement is that of \emph{pairwise
  distinct slopes}. Indeed every point set with pairwise distinct slopes has no
collinear triple. 

For the $\sqrt{n} \times \sqrt{n}$ grid $G$, Erd\H{o}s, Graham, Ruzsa, and Taylor~\cite{EGRT92}
showed that the largest size of a subset of $G$ with pairwise distinct slopes is
at least $\Omega(n^{1/4})$ and at most $O(n^{2/5})$. Soon after,
Zhang~\cite{Zha93} raised 
the lower bound to $\Omega((n/\log{n})^{1/3})$, and very recently Clemen~\cite{Cle24} 
sharpened it to $\Omega((n\log\log{n}/\log{n})^{1/3})$, and these are the current records.

We consider a generalization of the distinct slope problem to arbitrary planar
point sets. Let $h(n,s)$ be the largest integer, such that any set
$P\subseteq\mathbb{R}^2$ of $n$ points with at most $s$ points collinear
contains a subset of $h(n,s)$ points that determine pairwise distinct slopes. We
prove the following bounds on $h(n,s)$ for different ranges of $s$.
In particular, when $s=\Theta(\sqrt{n})$, our lower bound on $h(n,s)$ matches the
bound by Zhang~\cite{Zha93} for the $\sqrt{n} \times \sqrt{n}$ grid. 

\begin{theorem} \label{thm:distinct-slopes}
Let $n,s\in\mathbb{N}$ with $s=O(\sqrt{n})$. We have that
\begin{equation}
\label{thm:upph1}
h(n,s)=\Omega((n/\log{s})^{1/3}).
\end{equation}
On the other hand, we have
\begin{equation}
\label{thm:upph2}
h(n,s)=
\begin{cases}
O(\sqrt{n}) & \text{for $2 \leq s \leq n^{3/8}$,}\\
\noalign{\vskip9pt}
O((n/s)^{4/5}) & \text{for $n^{3/8} \leq s =O(\sqrt{n})$.}
\end{cases}
\end{equation}
\end{theorem}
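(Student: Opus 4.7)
The plan is to treat the three bounds in the statement separately.

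For the lower bound $h(n,s) = \Omega((n/\log s)^{1/3})$, I would adapt Zhang's probabilistic alteration argument for the $\sqrt{n}\times\sqrt{n}$ grid to the abstract setting of an $n$-point set $P$ with at most $s$ collinear. For each direction $\theta$ let $m(\theta)$ denote the number of pairs of $P$ with slope $\theta$. A subset of $P$ with pairwise distinct slopes must avoid two kinds of \emph{slope-conflicts}: collinear triples (which automatically produce two pairs with the same slope) and ``parallelogram quadruples'' $\{a,b,c,d\}$ with $\overline{ab}\parallel\overline{cd}$. I would sample each point of $P$ independently with probability $p$ and delete one point from each conflict. The surviving subset has expected size $pn - O(Tp^3 + Qp^4)$, where $T$ is the number of collinear triples (directly $O(sn^2)$ from the $s$-collinear hypothesis) and $Q = \sum_\theta m(\theta)^2$ is the slope-energy. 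The key step is an estimate of the form $Q = O(n^{3}\log s)$, which I would extract from a dyadic decomposition of the slopes $\theta$ according to the sizes of the lines in direction $\theta$, using that no line carries more than $s$ points so only $O(\log s)$ scales contribute nontrivially. Optimizing $p$ then yields $pn = \Omega((n^4/Q)^{1/3}) = \Omega((n/\log s)^{1/3})$.

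For the upper bound $h(n,s) = O(\sqrt{n})$ when $2 \le s \le n^{3/8}$, I would take $P = \{(a, a^{2}) : a \in A\}$ for an $n$-element set $A \subseteq \{1,\ldots,O(n)\}$. Three distinct points on the strictly convex parabola $y=x^{2}$ are never collinear, so $P$ contains no collinear triple and in particular satisfies the $s$-collinear hypothesis for every $s \ge 2$. The slope of the chord through $(a_i,a_i^{2})$ and $(a_j,a_j^{2})$ equals $a_i+a_j$; hence a subset of $P$ has pairwise distinct slopes if and only if its $x$-projection is a Sidon set in $A$. The classical Erd\H{o}s--Tur\'an Sidon bound gives $O(\sqrt{|A|}) = O(\sqrt n)$, and monotonicity of $h(n,s)$ in $s$ extends this to the whole range $2 \le s \le n^{3/8}$.

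For the upper bound $h(n,s) = O((n/s)^{4/5})$ when $n^{3/8} \le s = O(\sqrt{n})$, I would build $P$ as a random subset of the $(n/s)\times(n/s)$ integer grid $G$, retaining each lattice point independently with probability $q = s^{2}/n$. Then $\EE|P|=n$, and since $s\ge n^{3/8}$ lies comfortably above the Chernoff concentration threshold, with positive probability $|P|=\Theta(n)$ and every line of $G$ contains $O(s)$ sampled points; minor rescaling produces a set of exactly $n$ points with at most $s$ collinear. Any subset of $P$ with pairwise distinct slopes is in particular a distinct-slopes subset of the ambient grid $G$, so the Erd\H{o}s--Graham--Ruzsa--Taylor upper bound~\cite{EGRT92} for the $m\times m$ grid yields $O(m^{4/5})=O((n/s)^{4/5})$.

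The hardest step is obtaining the $\log s$ factor in the lower bound. A single-scale deletion using only the crude estimates $T=O(sn^{2})$ and $Q=O(sn^{3})$ yields merely $\Omega((n/s)^{1/3})$, which for $s=\Theta(\sqrt n)$ degenerates to $\Omega(n^{1/6})$, far below the target. Producing the sharper slope-energy bound $Q=O(n^{3}\log s)$ is essentially a balanced supersaturation statement for 4-tuples of parallel pairs and I expect will rely on Szemer\'edi--Trotter-type incidence estimates applied in conjunction with the $s$-collinear assumption, carefully tracked across the $O(\log s)$ dyadic scales.
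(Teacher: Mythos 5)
Your overall strategy coincides with the paper's: a random-sampling-plus-deletion argument for the lower bound with collinear triples and parallel pairs as the two obstacle types, the parabola/Sidon-set construction for the $O(\sqrt{n})$ upper bound, and a random subset of a grid combined with the Erd\H{o}s--Graham--Ruzsa--Taylor bound for the $O((n/s)^{4/5})$ upper bound. Both upper bounds are correct and essentially identical to the paper's (your $(n/s)\times(n/s)$ grid sampled at rate $s^{2}/n$ is the paper's construction in a different parametrization).

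There is, however, a genuine quantitative gap in the lower bound. You bound the number of collinear triples by the crude count $T=O(sn^{2})$ (each of the $\binom{n}{2}$ pairs extends to at most $s-2$ triples). Feeding this into the deletion step forces $Tp^{3}=O(pn)$, i.e. $k=pn=O(\sqrt{n/s})$; for $s=\Theta(\sqrt{n})$ this caps the surviving set at $O(n^{1/4})$, strictly below the target $(n/\log{s})^{1/3}$. So the collinear-triple obstacle, not only the slope-energy, requires a Szemer\'edi--Trotter-type estimate: the paper invokes Lemma~\ref{low} (Payne--Wood), which gives $O(n^{2}\log{s}+ns^{2})=O(n^{2}\log{s})$ collinear triples when $s=O(\sqrt{n})$, and then $Tp^{3}\leq k/3$ only requires $k=O(\sqrt{n/\log{s}})$, which is compatible with $k=c(n/\log{s})^{1/3}$. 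Your slope-energy claim $Q=O(n^{3}\log{s})$ is correct and is exactly the content of the paper's Lemma~\ref{lem:T}; the proof there assigns each trapezoid to the parallel line carrying more points, bounds the number assigned to an $i$-point line by $ni^{3}$, and evaluates $n\sum_{i}i^{3}k_{i}$ via the Szemer\'edi--Trotter corollary $\sum_{j\geq i}k_{j}=O(n^{2}/i^{3}+n/i)$ and Abel summation --- this is the concrete realization of the dyadic incidence argument you sketch but do not carry out. With Lemma~\ref{low} substituted for the crude triple count and the energy bound supplied as in Lemma~\ref{lem:T}, your argument goes through.
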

The lower bound in~\eqref{thm:upph1} is derived by selecting a random subset, with key parameters controlled using the Szemer\'edi--Trotter theorem from incidence geometry. 
The construction proving the first upper bound in~\eqref{thm:upph2} relies on the theory of Sidon sets from additive combinatorics. Meanwhile, the construction obtaining the second upper bound in~\eqref{thm:upph2} is a random subset of the integer grid $[n]^2$.

The organization of the paper is as follows. We first prove Proposition~\ref{new} in
Section~\ref{sec:general_low}. Theorem~\ref{law} is established in
Section~\ref{sec:general_up}. In Section~\ref{sec:monotone} we prove
Theorems~\ref{thm:mon-gp} and~\ref{thm:mon-gp-grid}. Section~\ref{sec:slopes} is
dedicated to proving Theorem~\ref{thm:distinct-slopes}. Finally,
Section~\ref{sec:consequences} explores additional consequences of our main results.

\section{Selecting a general position subset: Lower bound}
\label{sec:general_low}
In this section we prove Proposition~\ref{new}.
The main tool behind our proof is the following result of
Cooper and Mubayi~\cite{CM16} on the chromatic number of sparse
hypergraphs. Given positive integers $j\leq k$ and a $k$-uniform hypergraph
$\mathcal{H}$, we denote the \emph{maximum $j$-degree} of $\mathcal{H}$ by  
\begin{equation}
  \Delta_j(\mathcal{H}):=\max_{A\subseteq V(\mathcal{H}), |A|=j }\left\lvert\{e\in{E(\mathcal{H})}:\,A\subseteq{e}\}\right\rvert.
  % xxx: changed \A-> \H
\end{equation}
The \emph{maximum degree} of $\mathcal{H}$ is denoted by $\Delta(\mathcal{H}):=\Delta_1(\mathcal{H})$.
\begin{lemma}[{Cooper--Mubayi~\cite[Theorem 5]{CM16}}]
\label{dry}
Let $k\geq3$ be a fixed integer and $\mathcal{H}$ be a $k$-uniform hypergraph with maximum degree $\Delta=\Delta(H)$. If
\begin{equation}
\Delta_{j}(\mathcal{H})\leq\Delta^{\frac{k-j}{k-1}}/\gamma
\end{equation}
for $j=2,\dots,k-1$, where $\gamma=\gamma(\mathcal{H})>1$, then
\begin{equation}
\chi(\mathcal{H})=O\left(\left(\frac{\Delta}{\log{\gamma}}\right)^{\frac{1}{k-1}}\right).
\end{equation}
\end{lemma}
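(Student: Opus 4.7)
The plan is to prove this sparse-hypergraph chromatic bound via the probabilistic method, combining a uniformly random vertex colouring with a Lov\'asz Local Lemma (LLL) argument whose extra slack is harvested from the $j$-degree hypotheses. The strategy mirrors that of Frieze and Mubayi for linear hypergraphs (the case $\Delta_2=1$), but must control all intermediate codegrees simultaneously.

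I would set $C := c\,(\Delta/\log\gamma)^{1/(k-1)}$ for a sufficiently large constant $c>0$, and colour each vertex of $\mathcal{H}$ independently and uniformly from $[C]$. For each edge $e\in E(\mathcal{H})$, let $A_e$ denote the event that $e$ is monochromatic, so $\Prob(A_e)=C^{1-k}$; and $A_e$ is mutually independent of $\{A_f : e\cap f=\emptyset\}$. Bucketing the neighbours $f$ of $e$ in the dependency graph by $j:=|e\cap f|$, the dependency degree is at most $\sum_{j=1}^{k-1}\binom{k}{j}\Delta_j(\mathcal{H})$. A symmetric LLL using only the trivial bound $\Delta_j\leq\Delta$ merely recovers $\chi(\mathcal{H})=O(\Delta^{1/(k-1)})$; the improvement by a factor of $(\log\gamma)^{1/(k-1)}$ must be extracted from the hypothesis $\Delta_j\leq\Delta^{(k-j)/(k-1)}/\gamma$ for $j\geq 2$.

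To harvest that gain I would employ a two-phase semi-random (shatter-and-finish) argument. In phase one, a random partial colouring is applied and edges that become monochromatic are deleted; using Talagrand's or McDiarmid's inequality one argues that, with positive probability, the residual hypergraph on the still-uncoloured vertices decomposes into connected components of size $O(\operatorname{polylog}\gamma)$, with each $j$-layer contributing a multiplicative $\gamma^{-1}$ saving in the expected size of any ``bad tree'' of dependencies. In phase two, each small component is coloured greedily with $o(C)$ additional colours, which does not affect the asymptotics.

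The main obstacle, and the technical core of the Cooper--Mubayi argument, is the probabilistic analysis underpinning the shattering step: one must enumerate the potential bad substructures of combinatorial size $m$ and bound their survival probability by $\gamma^{-\Theta(m)}$, which is precisely where the thresholds $\Delta^{(k-j)/(k-1)}/\gamma$ are calibrated so that the union bound over all $m$ and all structural types closes. Carrying out this enumeration carefully---with the $j=1$ contributions handled by the random colouring and the $j\geq 2$ contributions absorbed by the $1/\gamma$ factors from the hypothesis---yields the claimed bound $\chi(\mathcal{H})=O((\Delta/\log\gamma)^{1/(k-1)})$.
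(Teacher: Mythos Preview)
The paper does not prove this lemma at all: it is quoted verbatim as an external result, Theorem~5 of Cooper--Mubayi~\cite{CM16}, and is used as a black box in the proof of Proposition~\ref{new}. So there is no ``paper's own proof'' to compare your proposal against.

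As for your sketch on its own merits: the high-level strategy (semi-random partial colouring followed by a cleanup phase, with the $j$-degree hypotheses supplying the extra $\gamma^{-1}$ slack in the union bounds) is indeed the philosophy behind the Cooper--Mubayi argument, so you are pointing in the right direction. However, what you have written is a plan, not a proof. The phrases ``using Talagrand's or McDiarmid's inequality one argues that \ldots the residual hypergraph \ldots decomposes into connected components of size $O(\operatorname{polylog}\gamma)$'' and ``one must enumerate the potential bad substructures \ldots and bound their survival probability by $\gamma^{-\Theta(m)}$'' are exactly the places where all the work lies, and you have not done any of it. In particular, the shattering step in Cooper--Mubayi is not a single random colouring plus LLL but an \emph{iterated} nibble: in each round one colours a random subset, freezes certain vertices, and tracks how the $j$-degrees evolve; the calibration $\Delta_j\le\Delta^{(k-j)/(k-1)}/\gamma$ is what keeps these degrees under control through all rounds. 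Your one-shot ``random colouring then delete monochromatic edges'' description does not capture this, and a naive version of it would not yield the $\log\gamma$ gain. If you intend to actually prove the lemma rather than cite it, you will need to reproduce the iterated-nibble bookkeeping in full.
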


To apply Lemma~\ref{dry}, for any given point set $P\subseteq\mathbb{R}^2$ we define a $3$-uniform hypergraph on $P$,
whose edges are the collinear triples in $P$. The following result of Payne and Wood~\cite{PW13} gives an
upper bound on the number of edges in this auxiliary hypergraph. 
\begin{lemma}[{Payne--Wood~\cite[Lemma 2.1]{PW13}}]
\label{low}
Let $P\subseteq\mathbb{R}^{2}$ be a set of $n$ points with at most $s$ points collinear. Then the number of collinear triples in $P$ is at most
\begin{equation}
O(n^{2}\log{s}+ns^{2}).
\end{equation}
\end{lemma}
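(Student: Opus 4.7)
The plan is to classify lines by how many points of $P$ they contain and to combine the Szemer\'edi--Trotter incidence bound with a dyadic decomposition. Let $t_k$ denote the number of lines in the plane containing exactly $k$ points of $P$, and let $L_k := \sum_{j \geq k} t_j$ be the number of lines containing at least $k$ points. By hypothesis $t_k = 0$ for $k > s$, so the number of collinear triples in $P$ equals
\[
T \;=\; \sum_{k=3}^{s} t_k \binom{k}{3} \;=\; O\!\left(\sum_{k=3}^{s} t_k k^3\right).
\]
The Szemer\'edi--Trotter theorem, applied to $n$ points and the lines containing at least $k$ of them, yields the standard estimate
\[
L_k \;=\; O\!\left(\frac{n^2}{k^3} + \frac{n}{k}\right) \qquad \text{for every } k \geq 2.
\]

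Next I would group the lines dyadically. For each integer $i \geq 1$ with $2^i \leq s$, consider the lines containing between $2^i$ and $2^{i+1}-1$ points of $P$. There are at most $L_{2^i} = O(n^2 \cdot 2^{-3i} + n \cdot 2^{-i})$ such lines, and each of them contributes at most $\binom{2^{i+1}}{3} = O(2^{3i})$ collinear triples. Hence the contribution of the $i$-th dyadic block to $T$ is
\[
O\!\left(2^{3i}\left(\frac{n^2}{2^{3i}} + \frac{n}{2^i}\right)\right) \;=\; O\!\left(n^2 + n \cdot 2^{2i}\right).
\]
Summing over the $O(\log s)$ relevant scales $i$ gives
\[
T \;=\; O\!\left(n^2 \log s \;+\; n \sum_{i:\, 2^i \leq s} 2^{2i}\right) \;=\; O(n^2 \log s + n s^2),
\]
since the geometric sum $\sum_{i:\, 2^i \leq s} 2^{2i}$ is $\Theta(s^2)$. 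The base case $k=3,4$ is absorbed into the first dyadic block.

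The argument has no real obstacle beyond bookkeeping: the $n^2 \log s$ term comes from adding the constant $n^2$ across the $O(\log s)$ dyadic scales, and the $n s^2$ term comes from the top scale $2^i \asymp s$ of the geometric sum. The only point of care is that the cutoff $k \leq s$ is what prevents the $n \cdot 2^{2i}$ contributions from blowing up past $ns^2$; if the collinearity assumption were dropped one would not get a bound better than $O(n^3)$ in general, so the hypothesis on $s$ enters precisely via the truncation of the dyadic sum.
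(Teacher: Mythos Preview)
Your argument is correct. Note, however, that the paper does not itself prove this lemma: it is quoted as Lemma~2.1 of Payne--Wood~\cite{PW13} and used as a black box. So there is no ``paper's own proof'' to compare against directly.

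That said, the paper does carry out an essentially identical computation in the proof of Lemma~\ref{lem:T}, where the quantity $\sum_{i=2}^{s} i^3 k_i$ (with $k_i$ the number of lines through exactly $i$ points) is bounded by $O(n^2\log s + ns^2)$ via the same Szemer\'edi--Trotter input $b_i = O(n^2/i^3 + n/i)$. The only difference is cosmetic: the paper uses Abel summation (writing $k_i = b_i - b_{i+1}$ and shifting the weight $i^3$ to $i^2$) rather than your dyadic grouping. Both devices convert the tail bound on $L_k$ into the same two-term estimate, and neither offers any advantage over the other here.
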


In our proof, we want to sparsify our auxiliary hypergraph so that its maximum
degree is not too large. On the other hand, we do not want its maximum degree to
be too small. To this end, we prove the following lemma, which allows us to
boost the maximum degree of our auxiliary hypergraph without increasing the
maximum $2$-degree too much.

\begin{lemma}
\label{eye}
Let $n,s\in\mathbb{N}$ with $n\geq{s}\geq2$. There exists a $3$-uniform hypergraph $\mathcal{H}$ on $n$ vertices, such that
\begin{equation}
\Delta(\mathcal{H})=\Theta(n\log{s}+s^{2})\quad\text{and}\quad\Delta_{2}(\mathcal{H})=O(s).
\end{equation}
\end{lemma}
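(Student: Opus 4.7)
The plan is to construct $\mathcal{H}$ as a union $\mathcal{H}_{1}\cup\mathcal{H}_{2}$, where the two parts separately account for the $s^{2}$ and the $n\log s$ contributions to the maximum degree, while the max 2-degree stays $O(s)$.

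For $\mathcal{H}_{1}$, partition $V=[n]$ into $\lceil n/s\rceil$ groups of size at most $s$ and include all triples lying in a single group. Then every vertex is in exactly $\binom{s-1}{2}=\Theta(s^{2})$ edges of $\mathcal{H}_{1}$, intra-group pairs have 2-degree $s-2$, and cross-group pairs have 2-degree $0$. For $\mathcal{H}_{2}$, take the edge-disjoint union of $k=\lceil\log_{2}s\rceil$ near-Steiner triple systems $T_{1},\ldots,T_{k}$ on $[n]$, each with codegree at most $1$ and with vertex-degrees $(1-o(1))(n-1)/2$. The existence of such systems (and of $k$ of them pairwise edge-disjoint, since the total number of triples used is $O(k n^{2})\ll\binom{n}{3}$) follows from classical design theory when $n\equiv 1,3\pmod 6$, and in general from the Pippenger--Spencer theorem on near-perfect matchings in near-regular hypergraphs of small codegree, applied iteratively on the triples not yet used. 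This yields $\Delta(\mathcal{H}_{2})=\Theta(kn)=\Theta(n\log s)$ and $\Delta_{2}(\mathcal{H}_{2})\leq k=O(\log s)$.

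Since $\log s\leq s$, combining gives
\begin{equation*}
\Delta(\mathcal{H})=\Theta\bigl(s^{2}+n\log s\bigr)=\Theta(n\log s+s^{2}),\qquad \Delta_{2}(\mathcal{H})\leq (s-2)+k=O(s),
\end{equation*}
as required.

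The main obstacle is the second step: producing $\Theta(\log s)$ near-Steiner triple systems that are simultaneously near-regular and edge-disjoint. Everything else is routine degree bookkeeping. A cleaner purely probabilistic alternative chooses each triple of $[n]$ independently with probability $p=c(n\log s+s^{2})/n^{2}$: a Chernoff-plus-union-bound argument delivers $\Delta=\Theta(n\log s+s^{2})$ with high probability, and an alteration step truncates any pair whose 2-degree exceeds a large constant multiple of $s$. Concentration of the 2-degrees is the delicate point when $s$ is very small (since the expected 2-degree is then $O(\log s)$, not $\Theta(s)$), which is precisely why the structured, design-based construction above is preferable in that regime.
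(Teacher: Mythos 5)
Your construction is correct, but it takes a genuinely different and considerably heavier route than the paper. The paper builds the hypergraph around a single apex vertex: writing $n=2m+1$, it fixes $q=\Theta(\log s+s^{2}/n)\leq m$ pairwise disjoint perfect matchings $S_{1},\dots,S_{q}$ of the complete graph on $[2m]$ (available from a $1$-factorization) and takes as edges all triples $\{n,x,y\}$ with $\{x,y\}\in\bigcup_{i}S_{i}$. The maximum degree then sits entirely at the apex and equals $mq=\Theta(n\log s+s^{2})$, while a pair containing the apex lies in exactly $q$ edges and any other pair in at most one, so $\Delta_{2}\leq q=O(\log s+s^{2}/n)$ --- in fact a stronger $2$-degree bound than the lemma asks for. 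You instead make the hypergraph near-regular, splitting the degree between $s$-cliques (for the $s^{2}$ term) and $\Theta(\log s)$ edge-disjoint near-Steiner triple systems (for the $n\log s$ term). This does go through: the iterated Pippenger--Spencer step is sound because deleting the triples of each previously built system removes at most one available triple through any given pair, so the auxiliary design hypergraph stays near-regular with codegree at most $1$; and although the nibble only guarantees the \emph{average} (hence maximum) degree of each system is $(1-o(1))(n-1)/2$ rather than regularity at every vertex, that is all the lemma needs. The cost is importing the R\"odl nibble or design-theoretic existence results to prove a statement that only concerns the maximum degree. Your probabilistic alternative has the defect you already flag: for small $s$ the expected $2$-degree is only $\Theta(\log s+s^{2}/n)$, so a union bound over all pairs gives $\Delta_{2}=O(s+\log n)$ rather than $O(s)$ without the alteration step. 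In short, nothing is wrong, but the apex-plus-matchings construction achieves the same (indeed slightly stronger) conclusion in two elementary lines.
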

\begin{proof}
We may assume that $n$ is sufficiently large and $n=2m+1$. Let
\begin{equation}
q=\Theta(\log{s}+s^{2}/n)\leq m
\end{equation}
and let $S_{1},\dots,S_{q}$ be pairwise disjoint perfect matchings in a clique on the vertex set $[2m]$. Let $\mathcal{H}$ be a $3$-uniform hypergraph on the vertex set $[n]$, whose edge set is
\begin{equation}
\left\{\{n,x,y\}\,:\,\{x,y\}\in\bigcup_{i=1}^{q}S_{i}\right\}.
\end{equation}
Then $\Delta(\mathcal{H})$ is equal to the degree of vertex $n$, namely, $\Delta(\mathcal{H})=mq=\Theta(n\log{s}+s^{2})$. For any two fixed vertices, the number of edges containing them is at most the number of perfect matchings, hence, $\Delta_{2}(\mathcal{H})\leq q = O(s)$.
\end{proof}

\subsection{Proof of Proposition~\ref{new}}

Suppose that $n$ is sufficiently large and $s\ll\sqrt{n\log{n}}$. Let
$P\subseteq\mathbb{R}^2$ be an arbitrary set of $n$ points with at most $s$
points collinear. By Lemma~\ref{low} there are at most $O(n^{2}\log{s})$
collinear triples in $P$. Let $\tilde{P}\subseteq{P}$ be a subset of $P$
obtained by only retaining $\lvert{P}\rvert/2$ points that are contained in the
least number of collinear triples. Now every point in $\tilde{P}$ is contained
in at most $O(\lvert\tilde{P}\rvert\log{s})$ collinear triples. Let
$\mathcal{H}=\mathcal{H}(\tilde{P})$ be a $3$-uniform hypergraph on the vertex
set $\tilde{P}$, whose edges are the collinear triples in $\tilde{P}$. As there is a one-to-one correspondence between the general position subsets of $\tilde{P}$ and the independent sets in $\mathcal{H}$, it suffices to show that 
\begin{equation}
\alpha(\mathcal{H})=\Omega\left(\sqrt{\frac{n\log{\frac{n\log{s}}{s^2}}}{\log{s}}}\right).
\end{equation}
Recall that
\begin{equation}
\Delta(\mathcal{H})=O(\lvert\tilde{P}\rvert\log{s}).
\end{equation}
Moreover, since $\tilde{P}$ does not contain a collinear $(s+1)$-tuple, any pair of points appears in at most $s-2$ collinear triples. This gives 
\begin{equation}
\Delta_{2}(\mathcal{H})\leq{s-2}.
\end{equation}
By Lemma~\ref{eye} there exists a $3$-uniform hypergraph $\mathcal{H}'$ on the vertex set $\tilde{P}$ satisfying
\begin{equation}
\Delta(\mathcal{H}')=\Theta(\lvert\tilde{P}\rvert\log{s})\quad\text{and}\quad\Delta_{2}(\mathcal{H}')=O(s).
\end{equation}
Let $\mathcal{H}''$ be the union of $\mathcal{H}$ and $\mathcal{H}'$, that is, a
$3$-uniform hypergraph with the vertex set $\tilde{P}$ and the edge set
$E(\mathcal{H})\cup{E(\mathcal{H}')}$. Then we have, letting
$\Delta=\Delta(\mathcal{H}'')$, 
\begin{equation}
\Delta=\Theta(\lvert\tilde{P}\rvert\log{s})\quad\text{and}\quad\Delta_{2}(\mathcal{H}'')=O(s)\leq\sqrt{\Delta}/\gamma,
\end{equation}
where
$\gamma=\Theta\left(\sqrt{\frac{\lvert\tilde{P}\rvert\log{s}}{s^2}}\right)>1$. Since
$\mathcal{H}$ is a subgraph of $\mathcal{H}''$, Lemma~\ref{dry} implies that 
\begin{equation}
\label{due}
\chi(\mathcal{H})\leq\chi(\mathcal{H}'')=O\left(\sqrt{\frac{\Delta}{\log{\gamma}}}\right)=
O\left(\sqrt{\frac{\lvert\tilde{P}\rvert\log{s}}{\log{\frac{\lvert\tilde{P}\rvert\log{s}}{s^{2}}}}}\right).  
\end{equation}
We complete the proof by noting that
$\alpha(\mathcal{H})\geq{v(\mathcal{H})/\chi(\mathcal{H})}$ and
$v(\mathcal{H})=\lvert{P'}\rvert=n/2$.
\qed

\section{Selecting a general position subset: Upper bounds}
\label{sec:general_up}
In this section we prove Theorem~\ref{law}, i.e.,

\begin{equation}
f(n,s)= 
\begin{cases}
O(n/s) & \text{for } n^{1/3} \leq s \leq n, \hfill \textbf{\quad\quad\quad part (i)}\\
\noalign{\vskip9pt}
O(n^{5/6+o(1)}/\sqrt{s}) & \text{for } 3 \leq s \leq n^{1/3}. \hfill \textbf{\quad\quad\quad part (ii)}
\end{cases}
\end{equation}
\subsection{Proof of Theorem~\ref{law} part (i)}
Consider the 2-dimensional grid $[n]^2$. Let $\mathcal{L}$ be the set of lines
that go through at least two points in $[n]^{2}$. Note that
$\lvert\mathcal{L}\rvert\leq{n^{4}}$ and every line in $\mathcal{L}$ contains at
most $n$ points in $[n]^2$. Moreover, by the pigeonhole principle, every set of
$2n+1$ points in $[n]^2$ contains a collinear triple. Let $P$ be a random subset
of $[n]^2$ by retaining each point independently with probability
$n^{\alpha-1}/2$, where $\alpha \in [1/2,1]$. Then by Chernoff's bound, see, \eg,~\cite[Chap.~4]{MU17}, it holds that $|P|=\Theta(n^{1+\alpha})$ with high probability. Furthermore, using the union bound and Chernoff's bound we have 
\begin{equation}
\begin{aligned}
\label{unionbound}
\mathbb{P}\left(P\text{ contains no collinear
  $n^{\alpha}$-tuple}\right)&=1-\mathbb{P}\left(\exists\,\ell\in\mathcal{L}:\,\lvert{P\cap\ell}\rvert\geq{n^{\alpha}}\right)\\ 
&\geq1-\sum_{\ell\in\mathcal{L}}\mathbb{P}\left(\lvert{P\cap\ell}\rvert\geq\left(1+\frac{n^{\alpha}}{2\mathbb{E}(\lvert{P\cap\ell}\rvert)}\right)\mathbb{E}(\lvert{P\cap\ell}\rvert)\right)\\
&\geq1-n^{4}\exp(-n^{\alpha}/10),
\end{aligned}
\end{equation}
which tends to $1$ as $n\to\infty$. Namely, there exists a point set $P\subseteq[n]^{2}$ of size $\Theta(n^{1+\alpha})$,
so that $P$ contains no collinear $n^{\alpha}$-tuple and every subset of $P$ of size $2n+1$ contains a collinear triple.
After scaling we obtain  
\begin{equation} 
f(n,n^{\frac{\alpha}{1+\alpha}})=O(n^{\frac{1}{1+\alpha}}) \quad \text{for}\quad \frac{1}{2} \leq \alpha \leq 1,
\end{equation}
or equivalently
\begin{equation}
f(n,s)=O(n/s) \quad \text{for}\quad n^{1/3} \leq s \leq n^{1/2}.
\end{equation}
For $n^{1/2} \leq s \leq n$, we simply consider the grid
$[s]\times[n/s]$. Obviously, this grid is a set of $n$ points with at most $s$
points on the same line. Furthermore, by the pigeonhole principle any $2n/s+1$
points contain a collinear triple. Hence, we have $f(n,s)=O(n/s)$.
\qed

\subsection{Proof of Theorem~\ref{law} part (ii)}

Recall that $3 \leq s \leq n^{1/3}$.
For this proof we use the hypergraph container method. This method was introduced independently by Balogh, Morris,
and Samotij~\cite{BMS15}, and by Saxton and Thomason~\cite{ST15}.
Here we use a version by Saxton and Thomason. For a hypergraph $\mathcal{H}$ and $C\subseteq{V(\mathcal{H})}$, let $\mathcal{H}[C]$ denote the subhypergraph of $\mathcal{H}$ induced by $C$.
\begin{lemma}[{Saxton--Thomason~\cite[Corollary 3.6]{ST15}}]
\label{die}
Let $\mathcal{H}$ be an $N$-vertex $k$-uniform hypergraph, and $0<\eps,\tau<1/2$. Define
\begin{equation}
\Delta(\mathcal{H},\tau):=2^{\binom{k}{2}-1}\sum_{j=2}^{k}\frac{\Delta_{j}(\mathcal{H})}{d\tau^{j-1}2^{\binom{j-1}{2}}},
\end{equation}
where $d=d(\mathcal{H})$ denotes the average degree of $\mathcal{H}$. Suppose
that $\tau<1/(1000\cdot{k}\cdot{k!^{2}})$ and
$\Delta(\mathcal{H},\tau)\leq\eps/(100k!)$. Then there exists
$c=c(k)\leq1000\cdot{k}\cdot{k!^{3}}$ and a collection
$\mathcal{C}\subseteq2^{V(\mathcal{H})}$, such that 
\begin{enumerate}
\item every independent set in $\mathcal{H}$ is a subset of some $C\in\mathcal{C}$;
\item for every $C\in\mathcal{C}$, $e(\mathcal{H}[C])\leq\eps{e(\mathcal{H})}$;
\item $\log\lvert\mathcal{C}\rvert\leq{cN\tau\log(1/\eps)\log(1/\tau)}$.
\end{enumerate}
\end{lemma}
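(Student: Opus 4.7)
The plan is to construct, for each independent set $I$ in $\mathcal{H}$, a pair consisting of a small fingerprint $T(I) \subseteq I$ and a container $C(I) \supseteq I$ with $C(I)$ determined entirely by $T(I)$, so that $e(\mathcal{H}[C(I)]) \leq \eps e(\mathcal{H})$ and $|T(I)| \leq c\tau N \log(1/\eps)$. Setting $\mathcal{C} := \{C(I) : I \text{ independent in } \mathcal{H}\}$ then gives items (1) and (2) by construction, while item (3) follows from bounding the number of possible fingerprints via $\log\binom{N}{\leq t} \leq t \log(eN/t) = O(\tau N \log(1/\tau) \log(1/\eps))$.

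The construction decomposes into two layers. The \emph{single-step container lemma} asserts there is $\alpha = \alpha(k) \in (0,1)$ and a map $I \mapsto (T_1(I), C_1(I))$ with $T_1 \subseteq I \subseteq C_1$, $|T_1| \leq \tau N$, $C_1$ a function of $T_1$ alone, and $e(\mathcal{H}[C_1]) \leq (1-\alpha) e(\mathcal{H})$. Granted this, I would iterate: apply it to the induced subhypergraph $\mathcal{H}[C_{i-1}]$ with residual independent set $I \setminus (T_1 \cup \cdots \cup T_{i-1})$, which is still independent in $\mathcal{H}[C_{i-1}]$. After $r = O(\log(1/\eps))$ rounds the edge count drops below $\eps e(\mathcal{H})$ and the concatenated fingerprint $T := T_1 \cup \cdots \cup T_r$ has total size $O(\tau N \log(1/\eps))$, matching the claim.

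For the single-step lemma I would run a scythe-style algorithm driven by a random linear ordering $\pi$ of $V(\mathcal{H})$. Maintain an ``available'' set $A$ (initially $V$) and a fingerprint $T$ (initially $\emptyset$), processing vertices in $\pi$-order. For the current vertex $v \in A$, define a weighted $T$-relative degree $d_T(v)$ by summing, over $j = 2, \dots, k$ and over $(j-1)$-subsets $S \subseteq T$, the count of edges of $\mathcal{H}$ containing $v \cup S$, weighted by $\tau^{-(j-1)} 2^{-\binom{j-1}{2}}$ — exactly the weights appearing in $\Delta(\mathcal{H},\tau)$. If $v \in I$ and $d_T(v)$ exceeds a calibrated threshold $\theta$, add $v$ to $T$ and then delete from $A$ every vertex $w$ whose $j$-codegree with some $(j-1)$-subset of $T \cup \{v\}$ is too large relative to a $\tau^{j-1}$-rescaled average. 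Terminate when every $v \in A$ has $d_T(v) < \theta$, and output $C_1 := A \cup T$.

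Two estimates then close the argument. First, the threshold $\theta$ is calibrated so that over a random $\pi$ the event ``$v$ is added to $T$'' has probability at most $\tau$ per vertex, giving $\E|T| \leq \tau N$; hence some deterministic ordering achieves $|T| \leq \tau N$. Second, the edges of $\mathcal{H}$ surviving inside $C_1 = A \cup T$ split into edges lost via the codegree deletion step (controlled by the $\Delta_j(\mathcal{H})$ terms aggregated into $\Delta(\mathcal{H},\tau)$) and edges whose every vertex has $d_T < \theta$ at termination (ruled out by aggregating edge-wise contributions). The assumption $\Delta(\mathcal{H},\tau) \leq \eps/(100 k!)$ enters at precisely this point and yields $e(\mathcal{H}[C_1]) \leq (1-\alpha) e(\mathcal{H})$ for some explicit $\alpha = \alpha(k) > 0$. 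The main obstacle is the coupled calibration of $\theta$ with the codegree deletion thresholds and the random-ordering averaging: this balancing is the heart of Saxton--Thomason's proof and is exactly what produces the factorial-heavy constants $c(k) \leq 1000 \cdot k \cdot k!^3$ and the restriction $\tau < 1/(1000 \cdot k \cdot k!^2)$.
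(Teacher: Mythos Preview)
The paper does not prove this lemma; it is quoted verbatim as a black box from Saxton--Thomason~\cite[Corollary~3.6]{ST15} and used without further justification. So there is no ``paper's own proof'' to compare against, and your sketch is really a sketch of the Saxton--Thomason argument itself.

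That said, your outline has the right two-layer architecture (single-step container plus $O(\log(1/\eps))$-fold iteration), and the scythe-style description is broadly faithful to~\cite{ST15}. There is, however, one genuine gap. You propose to run the scythe with respect to a \emph{random} linear ordering $\pi$, argue that $\E_\pi|T(I)|\le \tau N$ for each fixed $I$, and then derandomise to find ``some deterministic ordering'' achieving $|T|\le\tau N$. This breaks the crucial property that $C(I)$ is a function of $T(I)$ alone: the container is computed from the fingerprint \emph{and} the ordering, so if the ordering is allowed to depend on $I$ (which is what a per-$I$ derandomisation gives), two independent sets with the same fingerprint may land in different containers, and the counting bound in item~(3) collapses. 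Nor can you fix a single random $\pi$ and union-bound over all $I$, since there are far too many independent sets. The actual Saxton--Thomason scythe fixes one deterministic ordering in advance and bounds $|T(I)|$ for every $I$ by a direct combinatorial argument: each vertex added to $T$ causes the deletion of at least $\Omega(1/\tau)$ vertices from $A$ via the codegree thresholds, so $|T|\le O(\tau N)$ deterministically. Replacing your averaging step with this deletion-counting argument would repair the sketch.
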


Given $S\subseteq[n]^3$, we define $\mathcal{H}_S$ as the $3$-uniform hypergraph on vertex set $S$, whose edges are the collinear triples spanned by $S$. A supersaturation result by Balogh and Solymosi~\cite{BS18}, see also Keller and Smorodinsky~\cite{KS21}, shows that $e(\mathcal{H}_S)$ is large whenever $\lvert S\rvert\geq n^{2+c}$ for any fixed $c>0$. We further prove that in this case not only $\mathcal{H}_S$ contains many edges, but also there is a spanning subhypergraph $\mathcal{H}'\subseteq\mathcal{H}_S$, i.e., a subhypergraph on the same vertex set $S$, such that $\mathcal{H}'$ has almost as many edges as $\mathcal{H}_S$ and the co-degrees of $\mathcal{H}'$ are well-bounded.
\begin{lemma}
\label{arm}
Fix any $0<f<1$. Let $n\in\mathbb{N}$ be sufficiently large and $0\leq x \leq 1-f$. For every set $S\subseteq[n]^{3}$ with $\lvert{S}\rvert=n^{3-x}$, there exists a spanning subhypergraph $\mathcal{H}'\subseteq\mathcal{H}_S$ such that 
\begin{equation}
e(\mathcal{H}')\geq\frac{n^{6-4x}}{10^7\log{n}}, \quad \quad \Delta(\mathcal{H}') \leq \frac{n^{3-3x}}{10^3f\log{n}}, \quad \quad \Delta_2(\mathcal{H}')\leq 1. 
\end{equation}
\end{lemma}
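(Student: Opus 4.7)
The plan is a two-stage random construction. In the first stage I will build a linear spanning sub-hypergraph $\mathcal{H}_0 \subseteq \mathcal{H}_S$ by treating each line separately; in the second I will randomly sparsify $\mathcal{H}_0$ to drive the maximum degree down to the target. For every line $\ell \subset \mathbb{R}^3$ with $k_\ell := |S \cap \ell| \geq 3$, fix a partial Steiner triple system $T_\ell$ on the points of $S \cap \ell$: a family of collinear triples, any two sharing at most one point, with $|T_\ell| = \Theta(k_\ell^2)$ and each point of $S \cap \ell$ in at most $\lceil (k_\ell - 1)/2 \rceil$ triples (any near-optimal partial Steiner construction works). Set $\mathcal{H}_0 := \bigcup_\ell T_\ell$. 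Because two distinct lines in $\mathbb{R}^3$ meet in at most one point, triples from different lines share at most one vertex, so, combined with the Steiner property within each line, $\Delta_2(\mathcal{H}_0) \leq 1$ automatically.

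Then I let $\mathcal{H}'$ be obtained from $\mathcal{H}_0$ by retaining each edge independently with probability $p := c_0 / (f \cdot n^{2x} \log n)$, where $c_0$ is a small absolute constant; this preserves $\Delta_2 \leq 1$. A quantitative Balogh--Solymosi / Keller--Smorodinsky supersaturation, combined with the identity $\sum_\ell \binom{k_\ell}{2} = \binom{|S|}{2}$, will yield
\[
\sum_\ell |T_\ell| \;=\; \Omega\!\Big(\sum_{k_\ell \geq 3} k_\ell^2\Big) \;=\; \Omega\!\big(n^{6-2x}/\log n\big),
\]
so $\mathbb{E}[e(\mathcal{H}')] = p \sum_\ell |T_\ell| = \Omega(n^{6-4x}/\log^2 n)$. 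For any fixed $v \in S$, the fact that every other point of $S$ lies on a unique line through $v$ gives $\sum_{\ell \ni v}(k_\ell - 1) \leq |S| - 1$, whence $\mathbb{E}[d_{\mathcal{H}'}(v)] \leq p(|S| - 1)/2 = O(n^{3-3x}/(f \log n))$, uniformly in $v$. Chernoff's inequality applied to the sum of independent indicators defining $e(\mathcal{H}')$, together with Chernoff plus a union bound over $v \in S$ applied to each $d_{\mathcal{H}'}(v)$, will then show that for $c_0$ small enough the three target inequalities hold simultaneously with positive probability, and any witnessing realization is the required $\mathcal{H}'$.

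The main difficulty is the supersaturation input $\sum_{k_\ell \geq 3} k_\ell^2 = \Omega(n^{6-2x}/\log n)$. A direct appeal to the Balogh--Solymosi lower bound on $\sum_\ell \binom{k_\ell}{3}$ via the identity $\binom{k}{3} = (k-2)\binom{k}{2}/3$ loses a factor of order $s$, which is prohibitive once $s$ grows with $n$. To circumvent this I will dyadically partition the lines according to $k_\ell \in [K, 2K]$ and apply the Szemer\'edi--Trotter-type incidence bound in $\mathbb{R}^3$ underlying the Balogh--Solymosi proof: in each band the number of $K$-rich lines is $O(|S|^2/K^3 + |S|/K)$, and summing across bands isolates a dyadic range whose contribution to $\sum_{k_\ell \geq 3}\binom{k_\ell}{2}$ is $\Omega(|S|^2/\log n)$. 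The explicit factor of $f$ in the degree bound is inherited from these incidence constants, which degrade precisely as $|S|$ approaches $n^{2}$ (that is, as $x \to 1$).
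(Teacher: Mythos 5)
There are two genuine gaps, and the first is fatal. Your whole construction rests on the supersaturation claim $\sum_{k_\ell\geq 3}k_\ell^2=\Omega(n^{6-2x}/\log n)$, i.e.\ that a $\Omega(1/\log n)$-fraction of all pairs of $S$ lie on lines with at least three points of $S$. This is false for general $S$. Since $k^2\leq 9\binom{k}{3}$ for $k\geq 3$, your quantity is at most $9$ times the number of collinear triples of $S$; but a $p$-random subset of $[n]^3$ with $p=n^{-x}$ has $|S|=\Theta(n^{3-x})$ and only $O(p^3n^6)=O(n^{6-3x})$ collinear triples in expectation (the full grid $[n]^3$ spans $O(n^6)$ collinear triples, by summing $\gcd(q-p)$ over pairs), and $n^{6-3x}\ll n^{6-2x}/\log n$ for any constant $x>0$. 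So for such $S$ your first-stage hypergraph $\mathcal{H}_0$ simply does not have enough edges, no matter how the partial Steiner systems are chosen. The route you propose for proving the claim cannot rescue it: Szemer\'edi--Trotter-type bounds are \emph{upper} bounds on the number of $K$-rich lines, hence upper bounds on each dyadic band's contribution to $\sum\binom{k_\ell}{2}$; they cannot isolate a band with a large contribution unless you already have a lower bound on the total, which is exactly what is being claimed. The identity $\sum_\ell\binom{k_\ell}{2}=\binom{|S|}{2}$ is dominated by the $2$-point lines and says nothing about the rich ones.

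The second gap is that even granting your supersaturation input, uniform edge-sampling cannot deliver the stated constants. Your own computation yields $\mathbb{E}[e(\mathcal{H}')]=\Omega(n^{6-4x}/\log^2 n)$, a factor $\log n$ below the required $n^{6-4x}/(10^7\log n)$, and you cannot increase $p$ by that factor because your only control on degrees in $\mathcal{H}_0$ is the trivial $d_{\mathcal{H}_0}(v)\leq(|S|-1)/2$. The lemma demands a hypergraph whose maximum degree exceeds its average degree by only a constant factor (about $10^4/f$), and independent edge-sampling preserves the ratio of average to maximum degree up to concentration; so starting from an $\mathcal{H}_0$ in which this ratio may be as small as $O(1/\log n)$ (or worse), no choice of $p$ satisfies both the edge and degree requirements simultaneously. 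The paper's proof sidesteps both issues deterministically: it takes only a structured family $\mathcal{L}$ of short lines (at most $t=100n^x$ grid points each) with prime directions, so that the lines of each of the $\Theta(n^3/(t^3\log(n/t)))$ direction classes cover all of $S$ (this covering, not an incidence bound, is the supersaturation: each class forces at least $|S|/4$ matching triples), while every point lies on only $O(n^3/(t^3\log(n/t)))$ lines of $\mathcal{L}$ and receives at most one matching edge per line, which gives the near-regularity and the factor $f$ (from $\log(n/t)\geq f\log n$) directly. You would need an input of this covering type, not a global Steiner system plus sparsification.
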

\begin{proof}
Set $t=100n^{x}$ and let
\begin{equation}
U:=\left\{(a,b,c)\in\mathbb{Z}^{3}:\,1\leq{a}\leq{2n/t},\,-n\leq{b,c}\leq{n}\right\}
\end{equation}
and
\begin{equation}
V:=\left\{(a,b,c)\in\mathbb{Z}^{3}:\,n/t\leq{a}\leq2n/t,\,a\geq\max\{\lvert{b}\rvert,\lvert{c}\rvert\},\,a\text{ is a prime}\right\}.
\end{equation}
We define a family $\mathcal{L}=\mathcal{L}(t)$ of lines, whose starting points are in $U$ and whose directions are in $V$. Note that these lines are defined in $\mathbb{R}^{3}$ and we are interested in their intersection with $S$. It is easy to see that every line in $\mathcal{L}$ passes through at most $t$ points of $[n]^{3}$. Since the number of primes smaller than $m$ is at least $m/\log{m}$ and at most $1.1m/\log{m}$ when $m$ is sufficiently large (see~\cite[Theorem 1]{RS62}), we have that
\begin{equation}
\frac{n^3}{t^3\log(n/t)}\leq\lvert V\rvert\leq\frac{100n^3}{t^3\log(n/t)}.
\end{equation}

Moreover, the family $\mathcal{L}$ has the following properties.
\begin{claim}
\label{say}
For each $(a,b,c)\in{V}$,
\begin{enumerate}
\item the lines in $\mathcal{L}$ with direction $(a,b,c)$ cover the points in $[n]^{3}$;
\item the number of lines in $\mathcal{L}$ with direction $(a,b,c)$ is at most $8n^{3}/t$;
\item every point in $[n]^3$ is contained in at most $\frac{100n^3}{t^3\log(n/t)}$ lines from $\mathcal{L}$.
\end{enumerate}
\end{claim}
\begin{proof}
See the proof of~\cite[Claim 4.3]{BS18}.
\end{proof}
We now construct the spanning subhypergraph $\mathcal{H}'\subseteq \mathcal{H}_S$ as follows. For each line $\ell\in \mathcal{L}$ we place an arbitrary matching (each matching edge is a collinear triple) on the vertices $\ell\cap S$ covering all but at most $2$ vertices. The edge set of $\mathcal{H}'$ is the set of all the matching edges. Then it holds that $\Delta_2(\mathcal{H}')\leq 1$ and by Claim~\ref{say} item (3),
\begin{equation}
\Delta(\mathcal{H}')\leq \frac{100n^3}{t^3\log(n/t)}\leq \frac{n^{3-3x}}{10^3f\log{n}}.
\end{equation}

Let $\mathcal{L}_v\subseteq\mathcal{L}$ be the family of lines with direction $v\in{V}$. By Claim~\ref{say} item (2), $|\mathcal{L}_v|\leq8n^{3}/t$. Together with Claim~\ref{say} item (1) we have that the number $T_v$ of edges in $\mathcal{H}'$ coming from triples on lines in $\mathcal{L}_v$ is at least
\begin{equation}
T_v=\sum_{\ell\in\mathcal{L}_v} e(\mathcal{H}'[\ell\cap S])=\sum_{\ell\in\mathcal{L}_v} \left\lfloor \frac{\lvert\ell\cap{S}\rvert}{3} \right\rfloor \geq
\frac{|S|}{3}-\lvert\mathcal{L}_v\rvert\geq\frac{n^{3-x}}{4}.
\end{equation}
Then, because $\lvert{V}\rvert\geq\frac{n^3}{t^3\log(n/t)}$ and $t=100n^x$, we have
\begin{equation}
e(\mathcal{H}')=\sum_{v\in V}T_{v} \geq \frac{n^3}{t^3\log(n/t)} \cdot \frac{n^{3-x}}{4}\geq\frac{n^{6-4x}}{10^7\log{n}}.
\end{equation}
This completes the proof of Lemma~\ref{arm}.
\end{proof}

Our proof of Theorem~\ref{law} part (ii) extends the framework of that by Balogh and Solymosi~\cite{BS18},
which consists of three steps. We start with the $3$-dimensional grid
$[n]^3$. In the first step, we iteratively apply the hypergraph container lemma
on an auxiliary hypergraph until we obtain a collection
$\mathcal{C}\subseteq{2^{[n]^3}}$ of small containers, such that every subset of
$[n]^3$ without collinear triples is a subset of one of these containers. In the
second step, we take a random subset of $[n]^3$ and show that with high
probability this set contains no collinear $s$-tuple and also does not contain
%  xxx: specify the collinearity condition
any large subset without collinear triples. Note that the collection
$\mathcal{C}$ from the first step will be used here to conquer the union
bound. In the last step, we project this set from $[n]^3$ into the plane and
obtain an upper bound construction. The main novelty in our proof is that in each iteration we do not apply the hypergraph container lemma on $\mathcal{H}_S$ with $S\subseteq[n]^3$ but on a sparsened auxiliary hypergraph $\mathcal{H}'\subseteq\mathcal{H}_S$ given by Lemma~\ref{arm}. The merit of this step is that the co-degrees of $\mathcal{H}'$ are well-bounded, which enables us to get a better collection of containers and ultimately obtain better upper bounds.

\begin{proof}[Proof of Theorem~\ref{law} part (ii)]
Note that for $3 \leq s\leq \log{n}$, by the monotonicity of the function $f(n,s)$ we have that
\begin{equation}
f(n,s)\leq f(n,3) \leq n^{5/6+o(1)} = O(n^{5/6+o(1)}/\sqrt{s}).
\end{equation}
Hence, it suffices to prove the claimed upper bounds for
$\log{n}\leq s\leq n^{1/3}$. Fix any $0<f<1/100$ and assume that $n\in\mathbb{N}$ is
sufficiently large. Let
\begin{equation}
\alpha\in \left[\frac{2\log\log{n}}{\log{n}},1\right] \quad \text{and} \quad \gamma=\frac{2 \alpha+1}{3}\leq 1.
\end{equation}

For every subset $S\subseteq[n]^{3}$ of size $n^{3-x}$ with
$0\leq{x}\leq\gamma-f$, we can do the following. Let
$\mathcal{H}_S$ be the $3$-uniform hypergraph on the vertex set $S$,
whose edges are the collinear triples spanned by
$S$. Let $\mathcal{H}'\subseteq\mathcal{H}_S$ be the spanning subhypergraph guaranteed by Lemma~\ref{arm}. Since $e(\mathcal{H}')\geq\frac{n^{6-4x}}{10^7\log{n}}$, the average degree $d=d(\mathcal{H}')$ of $\mathcal{H}'$ is at least
$\frac{n^{3-3x}}{10^{7}\log{n}}$. Moreover, we have that 
\begin{equation}
\Delta_{2}(\mathcal{H}')\leq1 \quad \text{and} \quad \Delta_{3}(\mathcal{H}')\leq1.
\end{equation}
Let $\tau=n^{x+\gamma/2-3/2}$. Since $x\leq\gamma-f$ and $\gamma\leq1$, it holds that $\tau\leq{n^{-f}}\ll1$. Furthermore, 
\begin{equation}
\begin{aligned}
\Delta(\mathcal{H}',\tau)&=\frac{4\Delta_{2}(\mathcal{H}')}{d\tau}+\frac{2\Delta_{3}(\mathcal{H}')}{d\tau^{2}}\leq
\frac{4}{\frac{n^{3-3x}}{10^{7}\log{n}}\cdot
  n^{x+\frac{\gamma}{2}-\frac{3}{2}}}  +\frac{2}{\frac{n^{3-3x}}{10^{7}\log{n}}
  \cdot n^{2x+\gamma-3}}  \\ 
&\leq 
\frac{10^{8}\log{n}}{n^{2f}}+\frac{10^{8}\log{n}}{n^{f}}\leq \eps,
\end{aligned}
\end{equation}
where $\eps=n^{-f/2}<1/2$. Then by Lemma~\ref{die} there exists a collection $\mathcal{C}\subseteq2^{S}$, such that
\begin{enumerate}
\item every independent set in $\mathcal{H}'$ is a subset of some $C\in\mathcal{C}$;
\item for every $C\in\mathcal{C}$, $e(\mathcal{H}'[C])\leq{n^{-f/2}e(\mathcal{H}')}$;
\item $\lvert\mathcal{C}\rvert\leq\exp\left(10^{7}n^{(3+\gamma)/2}(\log{n})^{2}\right)$.
\end{enumerate}
Since $\mathcal{H}'\subseteq \mathcal{H}_S$, every independent set in $\mathcal{H}_S$ is a subset of some $C\in \mathcal{C}$. By item (1) we have for each container $C\in \mathcal{C}$ that
\begin{equation}
\label{shrink}
e(\mathcal{H}')-\lvert S\backslash C\rvert \cdot \Delta(\mathcal{H}') \leq e(\mathcal{H}'[C]) \leq n^{-f/2}e(\mathcal{H}').
\end{equation}
Recall that $\lvert S\rvert=n^{3-x}$ and by Lemma~\ref{arm}, $e(\mathcal{H}')\geq\frac{n^{6-4x}}{10^7\log{n}}$ and $\Delta(\mathcal{H}')\leq \frac{n^{3-3x}}{10^3f\log{n}}$. After rearranging~\eqref{shrink} we get
\begin{equation}
\lvert S\backslash C\rvert\geq  \frac{e(\mathcal{H}')}{\Delta(\mathcal{H}')} (1-n^{-f/2}) \geq \frac{fn^{3-x}}{10^5}=\frac{f}{10^5}\cdot\lvert{S}\rvert,
\end{equation}
which implies that $\lvert{C}\rvert\leq(1-10^{-5}f)\lvert{S}\rvert$.

We apply the above procedure first on $S_{0}=[n]^{3}$ and obtain a collection of
containers. Then, we iterate the procedure on the containers until every
container has size at most $n^{3-\gamma+f}$. This process takes at most $(\log{n})^2$ iterations. Indeed, in each iteration we shrink the container size by a $(1-10^{-5}f)$ factor. Since $f$ is fixed and $n$ is sufficiently large, we have $(1-10^{-5}f)^{(\log{n})^2}<1/n$. Namely, after at most $(\log{n})^2$ iterations the size of every container is at most $n^{3-\gamma+f}$.

Note that any subset of $[n]^{3}$ with no collinear triple is an independent set
in $\mathcal{H}_{S_{0}}$ and thus is contained in one of the final
containers. Overall, we obtain a collection $\mathcal{C}$ of 
\begin{equation}
\exp\left(10^{7}n^{(3+\gamma)/2}(\log{n})^{4}\right)\leq\exp\left(n^{(3+\gamma)/2+f}\right)
\end{equation}
containers, such that every subset of $[n]^{3}$ without collinear triples is
contained in one of these containers. Moreover, each container has size at most
$n^{3-\gamma+f}$.

Let $P$ be a random subset of $[n]^3$ by retaining each grid point independently with probability
\begin{equation}
p=\frac{1}{2}n^{\alpha-1}=\frac{1}{2}n^{(3\gamma-3)/2}.
\end{equation}
Then by Chernoff's bound we have that $|P|=\Theta(n^{2+\alpha})$ with high
probability. Applying the union bound and Chernoff's bound as in~\eqref{unionbound} we also have that $P$ contains no collinear $n^\alpha$-tuple
with high probability. Furthermore, we say that a subset of $P$ is \emph{bad} if it has size $n^{(3+\gamma)/2+2f}$ and spans no collinear triple. When $\gamma>1-2f/3$, since each container has size at most $n^{3-\gamma+f}<n^{(3+\gamma)/2+2f}$, $P$ contains no bad subset. When $0\leq \gamma\leq1-2f/3$, the expected number of bad subsets of $P$ is at most 
\begin{equation}
\begin{aligned}
\sum_{C\in\mathcal{C}}\binom{\lvert{C}\rvert}{n^{(3+\gamma)/2+2f}}p^{n^{(3+\gamma)/2+2f}}\leq\exp\left(n^{(3+\gamma)/2+f}\right)\binom{n^{3-\gamma+f}}{n^{(3+\gamma)/2+2f}}\left(n^{(3\gamma-3)/2}\right)^{n^{(3+\gamma)/2+2f}} \ll1.
\end{aligned}
\end{equation}
By the first moment method, we have that with high probability $P$ does not contain any bad subset. Consequently, there exists a set $P\subseteq[n]^{3}$ of size $\Theta(n^{2+\alpha})$, so
that $P$ contains no collinear $n^{\alpha}$-tuple and every subset of $P$ of
size $n^{(3+\gamma)/2+2f}$ spans a collinear triple. We can project $P$ into $\mathbb{R}^{2}$ in such a way that all the collinear relations remain
the same. After scaling we obtain 
\begin{equation} 
f(n,n^{\frac{\alpha}{2+\alpha}})=O\left(n^{\frac{(3+\gamma)/2+2f}{2+\alpha}}\right)=O\left(n^{\frac{5+\alpha+6f}{6+3\alpha}}\right)
\quad \text{for}\quad \alpha
\in\left[\frac{2\log\log{n}}{\log{n}},1\right], 
\end{equation}
which implies
\begin{equation}
f(n,n^{\beta})=O\left(n^{\frac{5-3\beta+6f}{6}}\right) \quad \text{for}\quad \beta \in \left[\frac{\log\log{n}}{\log{n}},\frac{1}{3}\right].
\end{equation}
Further, letting $s=n^{\beta}$,
\begin{equation}
\label{mew}
f(n,s)=O\left(n^{\frac{5+6f}{6}}/\sqrt{s}\right) \quad \text{for}\quad \log{n} \leq s \leq n^{1/3}.
\end{equation}
Since $f$ can be taken arbitrarily small in the beginning, we obtain the claimed upper bound.
\end{proof}

\section{Selecting a monotone general position subset}
In this section we prove Theorems~\ref{thm:mon-gp} and~\ref{thm:mon-gp-grid}. \label{sec:monotone}

\subsection{Proof of Theorem~\ref{thm:mon-gp}}

\emph{Lower bound.} Let $P\subseteq\mathbb{R}^2$ be an arbitrary set of $n$
points without collinear $(s+1)$-tuples, where $s=O(\sqrt{n})$. First, by
Proposition~\ref{new}, we have a general position subset of $P$ of size
$\Omega\left(\sqrt{n\log{\frac{n\log{s}}{s^2}}/\log{s}}\right)$. Then, by the
Erd\H{o}s--Szekeres theorem we can extract a monotone subset from it, of the
required size 
\begin{equation}
\Omega\left(\left(\frac{n\log{\frac{n\log{s}}{s^2}}}{\log{s}}\right)^{1/4}\right).
\end{equation}

\medskip
\emph{Upper bound.} Let $k= \sqrt{n/s}$, and assume that $k\in\mathbb{N}$.
Consider the $k \times k$ grid and slightly randomly perturb each of its grid points
and replace it by a collinear $s$-tuple spanning a segment of length $\eps>0$,
for a small $\eps \ll 1$. See Figure~\ref{fig:grid2} for an illustration.
\begin{figure}[htbp]
\centering
\includegraphics[scale=0.5]{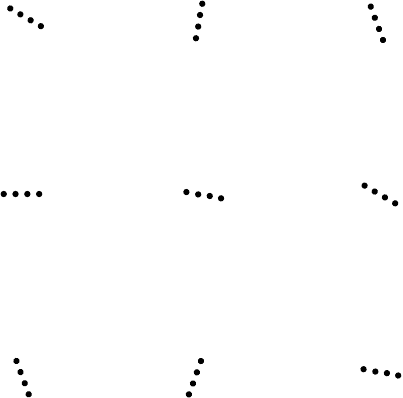}
\caption {Upper bound construction for Theorem~\ref{thm:mon-gp} when $n=36$ and $s=4$.}
\label{fig:grid2}
\end{figure}

In this construction there are $n$ points with at most $s$ collinear points. 
Let $P$ denote the resulting set and let $A \subseteq P$ be a monotone general position subset. Note that
\begin{enumerate}[(i)]
\item $A$ contains at most two points from each collinear $s$-tuple;
\item the largest monotone subset of the $k \times k$ grid that we started with has at most $2k-1$ points.
\end{enumerate}
Thus, $|A| \leq 4k-2 = O((n/s)^{1/2})$ as claimed.
\qed

\subsection{Proof of Theorem~\ref{thm:mon-gp-grid}}

\emph{Lower bound.}
According to a classical result of Jarn\'ik~\cite{Jar1926},
a strictly convex curve of length $L$ in the plane can be incident to at most
$3 (2\pi)^{-1/3} L^{2/3} (1 + o(1))$ lattice points and this estimate is the best possible.
In particular, Jarn\'ik's polygon in the $\sqrt{n} \times \sqrt{n}$ grid matches this bound;
see also~\cite{BP06}. One can then select a monotone arc of this polygon with
$\Omega(n^{1/3})$ vertices. Since the polygon is convex, the arc contains no collinear triple.
We next derive a better lower bound, namely $\Omega\left((n/\log{n})^{2/5}\right)$.

Let $n\in\mathbb{N}$ be sufficiently large and assume without loss of generality
that $\sqrt{n}=2m+1$ with $m\in\mathbb{N}$. Let $x=x(n)\geq1$ with
$x\ll\sqrt{n}$ to be determined later. Let $U$ be an axis-aligned square of side length $2m$
containing a $\sqrt{n}\times\sqrt{n}$ grid $G$.  
Let $o$ be the center point of the square $U$ and let $A$ be the annulus formed by two concentric circles of radii $m$ and $m-x$,
centered at $o$. 

Gauss' circle problem asks to determine the number of points of the integer
lattice $\mathbb{Z}^2$ contained in a circle of radius $R$ around the origin.
A result by Gauss says that this number is $\pi R^2+O(R)$, see, \eg,~\cite{Ha15}. Thus, 
$\lvert{A \cap G}\rvert=  \Theta(x\sqrt{n})$. We further show some important properties of the annulus~$A$.

\begin{figure}[h!]
%\vspace{0.2cm}
\begin{subfigure}{.49\textwidth}
	\centering
	\begin{tikzpicture}[scale=0.5]
		\foreach \i in {0,1,...,10} 
            \foreach \j in {0,1,...,10} 
                \node [vtx] (v\i\j) at (\i,\j) {};;
		
        \draw (5,5) circle [radius=5]; 
        \draw (5,5) circle [radius=3]; 
        \draw (5.5,-2) -- (7.5,12);
        \draw (7,-2) -- (9,12);
        \node [below] at (7,12) {$\ell$};
        \node [below] at (9.5,12) {$\ell'$};
        \node [below] at (8.4,4.9) {$a$};
        \node [below] at (9,9.15) {$b$};
        \node [draw, shape = circle, fill = black, inner sep=1.5pt] at (5,5){};
		\node [below] at (v55) {o};
        \node [draw, shape = circle, fill = black, inner sep=1.5pt] at (8.5,8.67){};
        \draw[gray] (5,5) -- (8.5,8.67);
        \node [draw, shape = circle, fill = black, inner sep=1.5pt] at (7.95,4.56){};
        \draw[gray] (5,5) -- (7.95,4.56);
        \draw[line width=0.4mm, <->] (0.2,3.6) -- (2.09,4.02);
        \node [below] at (1.2,3.87) {$x$};
        \draw[line width=0.4mm, <->] (2,5) -- (5,5);
        \node [above] at (3.53,4.93) {$m-x$};
\end{tikzpicture}
\caption*{(a)}
\end{subfigure}
\begin{subfigure}{.49\textwidth}
	\centering
	\begin{tikzpicture}[scale=0.5]
		\foreach \i in {0,1,...,10} 
            \foreach \j in {0,1,...,10} 
                \node [vtx] (v\i\j) at (\i,\j) {};;
		
        \draw (5,5) circle [radius=5]; 
        \draw (5,5) circle [radius=3]; 
        \draw (5.5,-2) -- (7.5,12);
        \node [below] at (7,12) {$\ell$};
        \node [below] at (7,4.9) {$a$};
                \node [draw, shape = circle, fill = black, inner sep=1.5pt] at (6.5,4.7){};
                \draw[gray] (5,5) -- (6.5,4.7);
        \node [below] at (7.35,7.95) {$b$};
                \node [draw, shape = circle, fill = black, inner sep=1.5pt] at (6.85,7.38){};
                \draw[gray] (5,5) -- (6.85,7.38);
        \node [below] at (7.6,10) {$c$};
                \node [draw, shape = circle, fill = black, inner sep=1.5pt] at (7.14,9.53){};
                \draw[gray] (5,5) -- (7.14,9.53);
		\node [draw, shape = circle, fill = black, inner sep=1.5pt] at (5,5){};
		\node [below] at (v55) {o};

	\end{tikzpicture}
    \caption*{(b)}
\end{subfigure}
	\caption{Part (a) illustrates the two parallel lines $\ell,\ell'$ from the proof of Lemma~\ref{lem:tangent}.\\
    Part (b) illustrates the points $a,b,c$ on the line $\ell$ from the proof of Claim~\ref{claim:length}.}
	\label{fig:annulus1}
\end{figure}
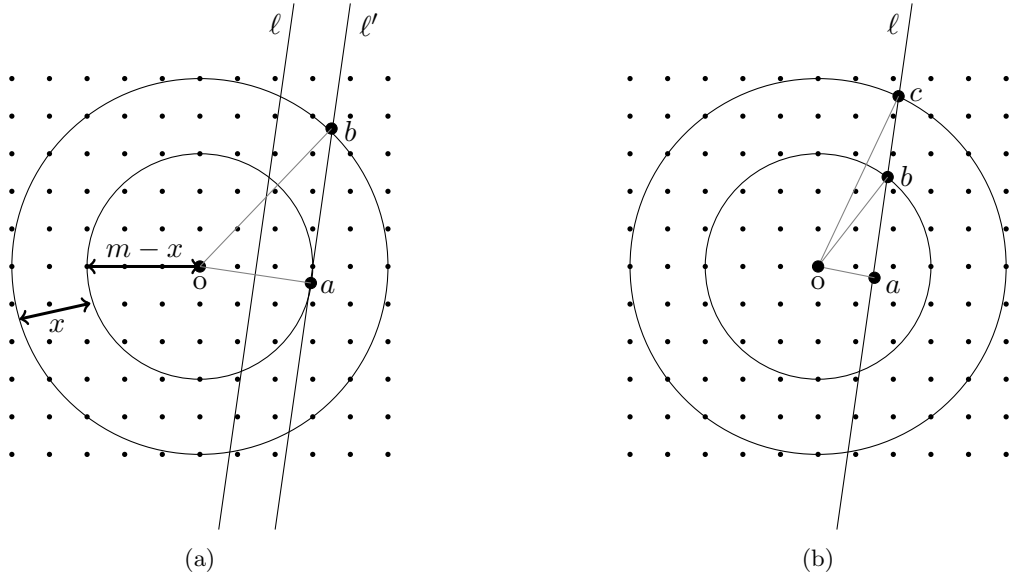

For a set $I$ of pairwise disjoint intervals on a line, let $\mu(I)$ denote their total length.
% xxx: Gave this more general definition.

\begin{lemma}\label{lem:tangent}
  Let $\ell$ be an arbitrary line.
  %and let $\mu(\ell \cap A)$ denote the length of the intersection of $\ell$ and $A$.
  Then $\mu(\ell \cap A) \leq 2 x^{1/2} n^{1/4}$.
\end{lemma}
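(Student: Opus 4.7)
The plan is to parameterize by $d := \dist(o, \ell)$ and show by a direct computation that, among all lines, the chord length in the annulus $A$ is maximized by those tangent to the inner circle --- exactly the extremal configuration suggested by Figure~\ref{fig:annulus1}(a). The bound $2x^{1/2}n^{1/4}$ will then emerge from estimating the length of this extremal tangent chord.

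If $d \geq m$ then $\ell \cap A = \emptyset$, so assume $d < m$. When $m-x \leq d < m$, the line $\ell$ misses the inner disk and $\ell \cap A$ is a single chord of the outer disk of length $2\sqrt{m^2-d^2}$, which is maximized at $d = m-x$ with value $2\sqrt{2mx-x^2}$. When $0 \leq d < m-x$, the line crosses both disks and $\ell \cap A$ is the union of two symmetric segments of total length
\begin{equation*}
2\bigl(\sqrt{m^2-d^2} - \sqrt{(m-x)^2-d^2}\bigr) = \frac{2(2mx-x^2)}{\sqrt{m^2-d^2} + \sqrt{(m-x)^2-d^2}},
\end{equation*}
whose denominator is minimized as $d \to m-x$, where it equals $\sqrt{2mx-x^2}$. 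Hence in both cases $\mu(\ell \cap A) \leq 2\sqrt{2mx-x^2}$.

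Since $\sqrt{n} = 2m+1 \geq 2m$, this yields $\mu(\ell \cap A) \leq 2\sqrt{2mx} \leq 2\sqrt{(2m+1)x} = 2 x^{1/2} n^{1/4}$, as required. The only point needing care is verifying that the rationalized denominator is monotone in $d$, but this is immediate since both radicals decrease with $d$, so no substantive obstacle is expected.
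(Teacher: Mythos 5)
Your proof is correct and follows essentially the same route as the paper: both reduce to the chord at the tangent position $d=m-x$ and bound its length by $2\sqrt{2mx}\leq 2x^{1/2}n^{1/4}$. The only difference is that the paper simply asserts $\mu(\ell\cap A)\leq\mu(\ell'\cap A)$ for the parallel tangent line $\ell'$, whereas you verify this extremality explicitly via the case analysis in $d$ and the rationalization argument.
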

\begin{proof}
Let $\ell'$ be the line which is parallel to $\ell$ and a tangent line to the
inner circle of $A$. Since $\mu(\ell \cap A) \leq \mu(\ell' \cap A)$, it remains
to show that $\mu(\ell' \cap A) \leq 2 x^{1/2} n^{1/4}$. Let $a$ denote the
tangent point and let $b$ denote one of the intersection points of $\ell'$ with
the outer circle. See Figure~\ref{fig:annulus1} (a) for an illustration.  

Recall that the radii of the inner circle and the outer circle are $m-x$ and
$m$, namely, $\mu(oa)=m-x$ and $\mu(ob)=m$. A straightforward trigonometric
calculation shows that 
\begin{equation}
\mu(\ell' \cap A) = 2 \mu(ab) = 2 \sqrt{\mu(ob)^2-\mu(oa)^2}\leq 2\sqrt{2mx} \leq 2x^{1/2}n^{1/4},
\end{equation}
as desired.
\end{proof}

\begin{lemma}\label{triples}
The number of collinear triples in $A\cap{G}$ is $O(x^{5/2}n^{3/4}\log{n})$.
\end{lemma}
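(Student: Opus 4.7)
The plan is to bound $T := \sum_\ell \binom{a_\ell}{3}$, where the sum is over lines in the plane and $a_\ell := |\ell \cap A \cap G|$, by grouping lines according to their primitive integer direction. Every lattice line has a direction $(q,p)$ with $\gcd(p,q) = 1$, along which consecutive lattice points are spaced by $d := \sqrt{p^2 + q^2}$. Since any line contributing to $T$ satisfies $a_\ell \geq 3$, we have $\mu(\ell \cap A) \geq 2d$, so Lemma~\ref{lem:tangent} forces $d \leq R := x^{1/2} n^{1/4}$. I will therefore only consider primitive directions with $d \leq R$, and use $T \leq \tfrac{1}{6}\sum_\ell a_\ell^3$.

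For a fixed primitive direction with spacing $d$, the parallel lattice lines are indexed by their signed perpendicular distance $d_i$ to the centre $o$, with consecutive values of $d_i$ spaced by $1/d$. I would split these lines into two regimes. In the \emph{boundary regime} $|d_i| \in [m-2x, m]$ there are $O(xd)$ such lines, each having $\mu(\ell\cap A) \leq 2\sqrt{2mx}$ by Lemma~\ref{lem:tangent}; hence $a_\ell = O(\sqrt{mx}/d)$, and this regime contributes $O\bigl(xd \cdot (mx)^{3/2}/d^3\bigr) = O(x^{5/2} m^{3/2}/d^2)$ to $\sum_\ell a_\ell^3$. In the \emph{deep-interior regime} $|d_i| \leq m-2x$, the line crosses both circles and, setting $s := m - x - |d_i| \geq x$, the identity $\sqrt{u} - \sqrt{v} = (u - v)/(\sqrt{u}+\sqrt{v})$ applied to the chord formula gives $\mu(\ell\cap A) = 2(\sqrt{m^2 - d_i^2} - \sqrt{(m-x)^2 - d_i^2}) = O\bigl(mx/\sqrt{(m-x)^2 - d_i^2}\bigr) = O\bigl(x\sqrt{m/s}\bigr)$. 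Writing $s = k/d$ for integer $k \geq xd$ and summing $a_\ell^3 \leq O\bigl(x^3 m^{3/2}/(kd)^{3/2}\bigr)$ over $k$ yields $O\bigl(x^3 m^{3/2}/d^{3/2}\bigr) \cdot \sum_{k \geq xd} k^{-3/2} = O(x^{5/2} m^{3/2}/d^2)$, using $\sum_{k \geq xd} k^{-3/2} = O((xd)^{-1/2})$. The per-direction total is thus $O(x^{5/2} m^{3/2}/d^2) = O(x^{5/2} n^{3/4}/d^2)$, since $m = \Theta(\sqrt{n})$.

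Finally, summing over primitive directions with $1 \leq d \leq R$ and using the standard estimate $\sum_{(p,q):\,\gcd(p,q) = 1,\, 1 \leq \sqrt{p^2 + q^2} \leq R} 1/(p^2 + q^2) = O(\log R) = O(\log n)$ yields $T = O(x^{5/2} n^{3/4} \log n)$, as claimed. The main obstacle is handling the deep-interior regime correctly: applying Lemma~\ref{lem:tangent} uniformly across all lines gives only $O(x^2 n \log n)$, which is weaker than the target by a factor of $n^{1/4}/x^{1/2}$ (recall $x \ll \sqrt{n}$). The saving comes from exploiting that lines passing well inside the inner disk cut $A$ in a thin sliver whose length $O(x\sqrt{m/s})$ decays with the depth $s$ from the inner boundary, and the convergence of $\sum k^{-3/2}$ to cancel the resulting $s^{-3/2}$ factor.
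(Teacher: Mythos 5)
Your proof is correct and follows essentially the same strategy as the paper's: group lines by primitive direction, bound the number of lattice points on a line by the chord length of $\ell\cap A$ divided by the lattice spacing, split into a near-tangent regime and a deep regime where the chord length decays like $x\sqrt{m/s}$ with the depth $s$, and sum the resulting $O(x^{5/2}n^{3/4}/d^2)$ per-direction bound over directions to pick up the $\log n$ factor. The only differences are cosmetic (a discrete sum over $k$ in place of the paper's integral over $t$, and normalizing by the exact spacing $d=\sqrt{p^2+q^2}$ rather than by $r$ with $0\leq q\leq r$).
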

\begin{proof}
Fix any slope $(q,r)\in\mathbb{Z}^{2}$ with $q$ and $r$ being co-prime. Let
$\mathcal{L}(q,r)$ be the family of lines with slope $(q,r)$ that are incident to
at least two points of $A\cap{G}$. We want to count the number of collinear
triples in $A\cap G$ coming from lines $\ell\in\mathcal{L}(q,r)$, denoted by
$T(q,r)$. Due to the symmetry of $A$ we may assume that $0\leq q\leq r$. Note
that each line $\ell\in\mathcal{L}(q,r)$ contains at most $O(\mu(\ell \cap
A)/r)$ points in $A\cap G$.
%To better estimate $T(q,r)$, we shall first bound $\mu(\ell \cap A)$. 
We first bound $\mu(\ell \cap A)$ from above.

\begin{claim}
\label{claim:length}
Let $t=t(n)\geq6$ with $tx\leq m$. If the distance $\mathrm{dist}(\ell,o)$
between a line $\ell \in \mathcal{L}(q,r)$ and the center point $o$ is $m-tx$,
then $\mu(\ell \cap A)\leq 4\sqrt{mx/t}$. 
\end{claim}
\begin{proof}
Let $a\in\ell$ be the point such that the segment $oa$ is orthogonal to
$\ell$. Since $t>1$, the line $\ell$ intersects the annulus $A$ in two
segments. Let $b$ and $c$ be the intersection points of $\ell$ with the inner
and outer circles on one of the sides. See Figure~\ref{fig:annulus1} (b) for an
illustration. It follows that 
\begin{equation}
\mu(oa)= m-tx,\quad \mu(ob)=m-x,\quad \mu(oc)=m
\end{equation}
and by symmetry that $\mu(\ell \cap A)=2\mu(bc)$. According to the Pythagorean formula we have
\begin{equation}
\mu(ab)^2=\mu(ob)^2-\mu(oa)^2=2mtx-t^2x^2-2mx+x^2, \quad \mu(ac)^2=\mu(oc)^2-\mu(oa)^2=2mtx-t^2x^2.
\end{equation}
Since $\mu(ab)^2+2\mu(ab)\mu(bc)\leq(\mu(ab)+\mu(bc))^2=\mu(ac)^2$, it holds that
\begin{equation}
\mu(bc)\leq
\frac{\mu(ac)^2-\mu(ab)^2}{2\mu(ab)}=\frac{2mx-x^2}{2\sqrt{2mtx-t^2x^2-2mx+x^2}}\leq
\frac{mx}{\sqrt{2mtx-t^2x^2-2mx}}\leq2\sqrt{\frac{mx}{t}}. 
\end{equation}
Thus, $\mu(\ell\cap A)\rvert\leq 2\mu(bc) \leq 4\sqrt{mx/t}$.
\end{proof}

Next we want to bound the number of lines $\ell\in\mathcal{L}(q,r)$ with given $\mathrm{dist}(\ell,o)$.

\begin{claim}\label{claim:number}
Let $0\leq t \leq m/x$ and $\eps>0$. Then the number of lines
$\ell\in\mathcal{L}(q,r)$ with $\mathrm{dist}(\ell,o)\in[m-(t+\eps)x,m-tx]$
is at most $O(\eps xr)$. 
\end{claim}
\begin{proof}
It suffices to prove that the distance between any two distinct lines in
$\mathcal{L}(q,r)$ is $\Omega(1/r)$, whereupon a simple calculation
shows that the number of lines $\ell\in\mathcal{L}(q,r)$ with
$\mathrm{dist}(\ell,o)\in[m-(t+\eps)x,m-tx]$ is $O(\eps xr)$. 

Let $\ell_1,\ell_2\in\mathcal{L}(q,r)$ be two distinct lines and let
$\mathrm{dist}(\ell_1,\ell_2)$ denote the distance between $\ell_1$ and
$\ell_2$. Since $\ell_1$ and $\ell_2$ both have the same slope $(q,r)$ and each
passes through at least two points of $A\cap G$ by definition, we can write them
as 
\begin{equation}
\ell_1:\,rx-qy+k_1=0 \quad\text{and}\quad \ell_2:\,rx-qy+k_2=0,
\end{equation}
where $k_1,k_2\in\mathbb{Z}$ and $k_1\neq k_2$. Then the formula of the distance between two parallel lines in the plane gives that
\begin{equation}
\mathrm{dist}(\ell_1,\ell_2)=\frac{\lvert{k_1-k_2}\rvert}{\sqrt{r^2+q^2}}\geq\frac{1}{\sqrt{r^2+q^2}}=\Omega\left(\frac{1}{r}\right),
\end{equation}
where in the last step we used that $0\leq q\leq r$.
\end{proof}

Now we write $T(q,r)$ as follows and compute the two parts separately.
\begin{equation}
\label{eq:lines1}
\begin{aligned}
T(q,r)&=\sum_{\ell\in\mathcal{L}(q,r)}\binom{|\ell \cap A\cap G|}{3} \\
&\leq
\sum_{\substack{\ell\in\mathcal{L}(q,r)\\\mathrm{dist}(\ell,o)\in(m-6x,m]}}O\left(\frac{\mu(\ell
    \cap A)^3}{r^3}\right) +
  \sum_{\substack{\ell\in\mathcal{L}(q,r)\\ \mathrm{dist}(\ell,o)\in[0,m-6x]}}O\left(\frac{\mu(\ell
    \cap A)^3}{r^3}\right), 
\end{aligned}
\end{equation}
where we used in the last step that $|\ell \cap A\cap G|=O(\mu(\ell \cap A)/r)$.
By Claim~\ref{claim:number} the number of lines $\ell\in\mathcal{L}(q,r)$ with
$\mathrm{dist}(\ell,o)\in[m-6x,m]$ is at most $O(xr)$ and by
Lemma~\ref{lem:tangent} $\mu(\ell \cap A) \leq 2 x^{1/2} n^{1/4}$ holds for all
$\ell\in\mathcal{L}(q,r)$. Hence, we have that 
\begin{equation}
\label{eq:lines2}
\sum_{\substack{\ell\in\mathcal{L}(q,r)\\\mathrm{dist}(\ell,o)\in(m-6x,m]}}O\left(\frac{\mu(\ell
    \cap A)^3}{r^3}\right)\leq O(xr) \cdot
  O\left(\frac{x^{3/2}n^{3/4}}{r^3}\right)=O\left(\frac{x^{5/2}n^{3/4}}{r^2}\right). 
\end{equation}
For any $0\leq t \leq m/x$ and $\eps>0$, by Claim~\ref{claim:number} the
number of lines $\ell\in\mathcal{L}(q,r)$ with
$\mathrm{dist}(\ell,o)\in[m-(t+\eps)x,m-tx]$ is at most $O(\eps xr)$ and
we have that $\mu(\ell \cap A)\leq4\sqrt{mx/t}$ for every such line $\ell$ by
Claim~\ref{claim:length}. Therefore, 
\begin{equation}
\label{eq:lines3}
\begin{aligned}
\sum_{\substack{\ell\in\mathcal{L}(q,r)\\ \mathrm{dist}(\ell,o)\in[0,m-6x]}}O\left(\frac{\mu(\ell
  \cap
  A)^3}{r^3}\right)&\leq\lim_{\eps\to0}\sum_{t\in\{6,6+\eps,6+2\eps,\dots,m/x\}}\sum_{\substack{\ell\in\mathcal{L}(q,r)\\ \mathrm{dist}(\ell,o)\in[m-(t+\eps)x,m-tx]}}O\left(\frac{\mu(\ell
  \cap A)^3}{r^3}\right)\\ 
&\leq\lim_{\eps\to0}\sum_{t\in\{6,6+\eps,6+2\eps,\dots,m/x\}}O(\eps xr)\cdot O\left(\frac{m^{3/2}x^{3/2}}{t^{3/2}r^3}\right)\\
&=\int_{t=6}^{m/x}O\left(\frac{m^{3/2}x^{5/2}}{t^{3/2}r^2}\right)\mathrm{d}t = O\left(\frac{m^{3/2}x^{5/2}}{r^2}\right) \cdot
\int_{t=6}^{m/x} t^{-3/2}  \mathrm{d}t \\
&=O\left(\frac{x^{5/2}n^{3/4}}{r^2}\right).
\end{aligned}
\end{equation}

Note that in the calculation of the integral in~\eqref{eq:lines3} we used that $x\ll m \approx \sqrt{n}/2$ and thus
the last integral in the penultimate line of~\eqref{eq:lines3} is $O(1)$. By combining~\eqref{eq:lines1},~\eqref{eq:lines2} and~\eqref{eq:lines3} we obtain  
\begin{equation}
\begin{aligned}
T(q,r)&=O\left(\frac{x^{5/2}n^{3/4}}{r^2}\right).
\end{aligned}
\end{equation}

Now let
\begin{equation}
S=\left\{(q,r)\in\{-2m,\dots,2m\}\times\{-2m,\dots,2m\}\text{, where }q\text{ and }r\text{ are coprime}\right\}
\end{equation}
be the set of all possible slopes of lines that go through at least two points of $A\cap{G}$. The number of collinear triples in $A\cap G$ is 
\begin{equation}
\begin{aligned}
  \sum_{(q,r)\in S}T(q,r) \leq 2 \sum_{\substack{(q,r)\in S,\\ \,0\leq q\leq r}}T(q,r) &=
%  \sum_{(q,r)\in S}T(q,r)=O\left(\sum_{\substack{(q,r)\in S,\\ \,0\leq q\leq r}}T(q,r)\right)&=
  O\left(\sum_{r=1}^{\sqrt{n}}r\cdot\frac{x^{5/2}n^{3/4}}{r^{2}}\right)=O(x^{5/2}n^{3/4}\log{n}),
\end{aligned}
\end{equation}
completing the proof of Lemma~\ref{triples}.
\end{proof}

Recall that our goal is to select a large general position subset $P$ of
$A\cap{G}$, such that for any $u,v\in{P}$ with $u_x<v_x$ we have $u_y \leq
v_y$. To achieve this, one needs to avoid two types of obstacles, collinear
triples and descending pairs. A \emph{descending pair} is a pair $(u,v)$ of
points with $u_x < v_x$ and $u_y > v_y$. In Lemma~\ref{triples} we have bounded
the number of collinear triples in $A \cap G$. Next we want to bound the number
of descending pairs in a certain section of $A\cap G$. 

Let $v_1$ be the lower right vertex of the square $U$. Moreover, let $v_2$ and
$v_3$ be two points on the right and bottom sides of the square $U$,
respectively, with $\angle{v_2ov_3}=30^\circ$ and $ov_1$ being the bisector. Let
$Q$ be the quadrilateral formed by $ov_1v_2v_3$ and let $B = A \cap Q$ be a
sector of the annulus $A$. See Figure~\ref{fig:annulus4} for an
illustration. Note that $\lvert{B \cap G}\rvert=\Theta(\lvert{A \cap
  G}\rvert)=\Theta(x\sqrt{n})$. 

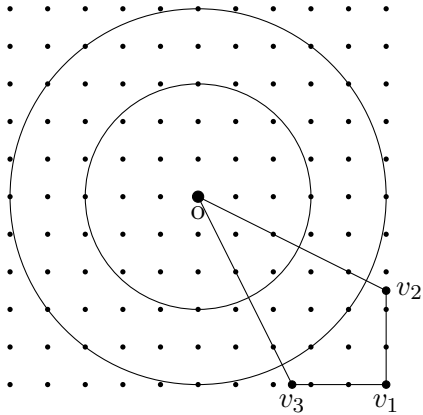
\begin{figure}[h!]
	\centering
	\begin{tikzpicture}[scale=0.5]
		\foreach \i in {0,1,...,10} 
            \foreach \j in {0,1,...,10} 
                \node [vtx] (v\i\j) at (\i,\j) {};;
		
        \draw (5,5) circle [radius=5]; 
        \draw (5,5) circle [radius=3]; 
        \draw (5,5) -- (7.5,0);
        \draw (5,5) -- (10,2.5);
        \draw (10,2.5) -- (10,0);
        \draw (7.5,0) -- (10,0);
        \node [below] at (10,0) {$v_1$};
        \node [right] at (10,2.5) {$v_2$};
        \node [below] at (7.5,0) {$v_3$};
                \node [draw, shape = circle, fill = black, inner sep=1pt] at (10,0){};
                \node [draw, shape = circle, fill = black, inner sep=1pt] at (10,2.5){};
                \node [draw, shape = circle, fill = black, inner sep=1pt] at (7.5,0){};

		\node [draw, shape = circle, fill = black, inner sep=1.5pt] at (5,5){};
		\node [below] at (v55) {o};

	\end{tikzpicture}

	\caption{The sector $B$ of the annulus $A$. }
	\label{fig:annulus4}
\end{figure}

\begin{lemma} \label{pairs}
The number of descending pairs in $B \cap G$ is $O(x^3\sqrt{n})$.
\end{lemma}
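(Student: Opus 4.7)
The plan is to show, for each $u \in B \cap G$, that
\[
N(u) := \bigl|\{v \in B \cap G : v_x > u_x \text{ and } v_y < u_y\}\bigr| = O(x^2),
\]
and then sum over $u$: since $|B \cap G| = \Theta(x\sqrt{n})$, this yields $\sum_u N(u) \leq |B \cap G| \cdot O(x^2) = O(x^3\sqrt{n})$, which is the lemma.

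To establish $N(u) = O(x^2)$, I would pass to polar coordinates centered at $o$ and write $u = (r_u\cos\theta_u,\,-r_u\sin\theta_u)$ and $v = (r_v\cos\theta_v,\,-r_v\sin\theta_v)$, with $r_u,r_v \in [m-x,m]$ and $\theta_u,\theta_v \in [30^\circ,60^\circ]$ (since $B \subseteq Q$). The descending conditions $v_x > u_x$ and $v_y < u_y$ translate to $r_v\cos\theta_v > r_u\cos\theta_u$ and $r_v\sin\theta_v > r_u\sin\theta_u$, both of which force $r_v > r_u$. Setting $\rho := r_v - r_u > 0$ and $\phi := \theta_v - \theta_u$, a first-order expansion (valid because $\rho \leq x \ll m$) yields $\phi < \rho\cot\theta_u / r_u$ and $\phi > -\rho\tan\theta_u / r_u$. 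Since $\tan\theta_u,\cot\theta_u = \Theta(1)$ throughout $[30^\circ,60^\circ]$, these combine to $|\phi| = O(\rho/r_u)$, so the admissible region $R(u)$ is a thin radial wedge at $u$ of area at most
\[
\int_0^{m-r_u} 2\,r_v \cdot \frac{O(\rho)}{r_u}\,d\rho = O\bigl((m-r_u)^2\bigr) = O(x^2).
\]

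The region $R(u)$ is bounded by at most four smooth pieces (portions of the lines $v_x = u_x$ and $v_y = u_y$, and arcs of the inner and outer boundary circles of $A$), has diameter $O(x)$, and therefore perimeter $O(x)$; a standard lattice-point estimate of the form $|R \cap \mathbb{Z}^2| \leq \mathrm{Area}(R) + O(\mathrm{Perimeter}(R)) + O(1)$ then gives $N(u) = O(x^2)$ whenever $x \geq 1$, and for $x < 1$ the region contains no lattice points at all. The main subtlety I anticipate is when $u$ lies close to the angular boundary of $Q$, so that one side of the wedge is clipped by a radial edge of $Q$; such clipping only shrinks $R(u)$ and therefore strengthens the area bound rather than weakening it. Otherwise the calculation is routine: the Taylor expansion is justified because the quadratic remainders $r_u\phi^2$ and $\rho\phi$ are dominated by the linear terms once $|\phi| = O(\rho/r_u)$, and the lattice count is standard since $R(u)$ is carved out by only $O(1)$ smooth boundary pieces.
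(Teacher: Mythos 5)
Your proposal follows exactly the paper's argument: fix $u$, bound the number of admissible partners $v$ by $O(x^2)$, and sum over the $O(x\sqrt{n})$ points of $B\cap G$; the paper simply asserts the per-point bound ``by construction,'' whereas you supply the polar-coordinate justification. The details you fill in are correct (the minor aside that the region is empty when $x<1$ is inaccurate but irrelevant, since the construction takes $x\geq 1$).
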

\begin{proof}
By construction (due to that $\angle{v_2ov_3}=30^\circ$), 
for each fixed point $u \in B\cap G$, there are $O(x^2)$ points $v$, where $(u,v)$ is a descending pair. 
Summing over all $O(x \sqrt{n})$ points in $B\cap G$, the upper bound follows.  
\end{proof}

We can now finalize the proof of the lower bound. Take a random subset $P$ of
$B\cap{G}$ by retaining each point independently with probability
$p\in(0,1)$. We have that
$\mathbb{E}(\lvert{P}\rvert)=\Omega(x\sqrt{n}p)$. Moreover, letting $D$ and $T$
denote the numbers of descending pairs and collinear triples in $P$,
respectively, by Lemmas~\ref{triples} and~\ref{pairs} it holds that 
\begin{equation}
\mathbb{E}(D)=O(p^2x^3\sqrt{n})\quad\text{and}\quad\mathbb{E}(T)=O(p^3x^{5/2}n^{3/4}\log{n}).
\end{equation}
Setting $x=n^{1/10}(\log{n})^{2/5}$, we have $\mathbb{E}(\lvert{P}\rvert)=\Omega\left(pn^{3/5}(\log{n})^{2/5}\right)$ and
\begin{equation}
\mathbb{E}(D)=O\left(p^2n^{4/5}(\log{n})^{6/5}\right)\quad\text{and}\quad\mathbb{E}(T)=O\left(p^3n\log{n}\right).
\end{equation}
Let $p=\frac{c}{n^{1/5}(\log{n})^{4/5}}$ with a sufficiently small constant $c>0$. Then
%We shall have that
\begin{equation}
\mathbb{E}(\lvert{P}\rvert-D-T)=\mathbb{E}(\lvert{P}\rvert)-\mathbb{E}(D)-\mathbb{E}(T)\geq\frac{1}{3}\mathbb{E}(\lvert{P}\rvert)=\Omega\left(\frac{n^{2/5}}{(\log{n})^{2/5}}\right).
\end{equation}
Since each descending pair and collinear triple can be eliminated by deleting
one point, there exists a monotone general position subset of $B\cap G$ of
size $\Omega\left((n/\log{n})^{2/5}\right)$, proving the lower bound. 

\medskip
\emph{Upper bound.} Let $G$ be a $\sqrt{n} \times \sqrt{n}$ grid of $n$
points. An old result of Pomerance~\cite{Pom80} states that for every $k \geq
3,b >0$, there exists a number $m_0(k,b)$ such that if $m \geq m_0(k,b)$ and
$\mathbf{u_0}, \mathbf{u_1},\ldots, \mathbf{u_m}$ are points in $\ZZ^2$ with
$\sum_{i=1}^m |\mathbf{u_i} -\mathbf{u_{i-1}}| \leq b m$, then $\{\mathbf{u_0},
\mathbf{u_1},\ldots, \mathbf{u_m}\}$ contains a collinear $k$-tuple.
%We will  use this result for $k=3$. 

Assume for contradiction that there exists an absolute constant $c>0$
such that there exists a monotone general position set
$P:=\{\mathbf{u_0}, \mathbf{u_1},\ldots, \mathbf{u_m}\}\subseteq{G}$ with $m
\geq c \sqrt{n}$. Let $k=3$, $b=2/c$ and assume that $n$ is sufficiently large
so that $m \geq c \sqrt{n} \geq m_0(k,b)$. Since the set $P$ is monotone, we
have 
\begin{equation}
\sum_{i=1}^m |\mathbf{u_i} -\mathbf{u_{i-1}}| \leq 2\sqrt{n} \leq \frac{2m}{c} =b m.
\end{equation}

By the above result (with $k=3$ and $b=2/c$), $P$ contains a collinear triple,
a contradiction. This concludes the proof of the upper bound and the theorem.\qed

\section{Selecting a subset with pairwise distinct slopes} \label{sec:slopes}

In this section we prove Theorem~\ref{thm:distinct-slopes}. 
First we need an upper bound on the number of trapezoids (each trapezoid is an obstacle
in achieving the final goal).

\begin{lemma} \label{lem:T}
Let $P\subseteq{R}^2$ be a set of $n$ points with at most $s$ points collinear. Then the number of trapezoids spanned by $P$ is at most
\begin{equation}
O(n^3 \log{s} + n^2 s^2).
\end{equation}
\end{lemma}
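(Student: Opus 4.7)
For each slope $\sigma$, let $\mathcal{L}_\sigma$ denote the set of lines of slope $\sigma$ meeting $P$ in at least two points, and set $M_\sigma := \sum_{\ell \in \mathcal{L}_\sigma} \binom{|\ell \cap P|}{2}$, the number of point-pairs of $P$ of slope $\sigma$. Every trapezoid is determined by two disjoint point-pairs of the same slope lying on two distinct parallel lines, so the number of trapezoids is at most $\frac{1}{2}\sum_\sigma M_\sigma^2$, and it suffices to prove $\sum_\sigma M_\sigma^2 = O(n^3 \log s + n^2 s^2)$.

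The plan rests on two ingredients. First, for each integer $k \geq 2$ and slope $\sigma$, let $A_\sigma(k)$ denote the number of lines in $\mathcal{L}_\sigma$ that contain at least $k$ points of $P$. Since the lines of $\mathcal{L}_\sigma$ are parallel and hence pairwise disjoint, their point-sets partition a subset of $P$, so $A_\sigma(k) \leq n/k$. Second, by the Szemer\'edi--Trotter theorem, the total number of lines through at least $k$ points of $P$ satisfies $L(k) := \sum_\sigma A_\sigma(k) = O(n^2/k^3 + n/k)$. The layer-cake identity $\binom{t}{2} = \sum_{j=2}^{t}(j-1)$ then gives $M_\sigma \leq \sum_{j=2}^s j\,A_\sigma(j)$.

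Squaring and interchanging summation order yields $\sum_\sigma M_\sigma^2 \leq \sum_{2 \leq j_1, j_2 \leq s} j_1 j_2 \sum_\sigma A_\sigma(j_1) A_\sigma(j_2)$. Restricting to $j_1 \leq j_2$ (which costs only a factor of $2$ by symmetry) and applying the estimate
\[
\sum_\sigma A_\sigma(j_1) A_\sigma(j_2) \leq \bigl(\max_\sigma A_\sigma(j_1)\bigr)\sum_\sigma A_\sigma(j_2) \leq \frac{n}{j_1}\, L(j_2),
\]
one obtains
\[
\sum_\sigma M_\sigma^2 = O\!\left(n \sum_{j=2}^{s} j^2 L(j)\right) = O\!\left(n \sum_{j=2}^{s} \left(\frac{n^2}{j} + n j\right)\right) = O(n^3 \log s + n^2 s^2).
\]
The main obstacle is choosing the right factorization of the inner sum: placing $\max_\sigma$ on the smaller index $j_1$ (rather than on $j_2$) is essential, since the dual choice introduces a spurious factor of $s$. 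The rest is routine bookkeeping that separates the two terms of the Szemer\'edi--Trotter bound into the two summands of the final estimate.
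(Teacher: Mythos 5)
Your proof is correct and rests on the same two pillars as the paper's: the disjointness of parallel lines (giving the $n/j$ bound on the less rich side of each trapezoid) and the Szemer\'edi--Trotter corollary counting $j$-rich lines on the richer side; your choice to place $\max_\sigma A_\sigma(j_1)$ on the smaller index is exactly the paper's device of assigning each trapezoid to the parallel line with more points. The only difference is bookkeeping --- you expand $\binom{t}{2}$ over level sets and split a double sum, whereas the paper charges trapezoids to lines directly and finishes with Abel summation --- so this is essentially the same argument.
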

\begin{proof}
Let $T$ denote the number of trapezoids spanned by $P$. For every trapezoid $t=abcd$, let $ab \parallel cd$ be a pair of parallel lines. 
Assign every trapezoid to the parallel line that is incident to the largest number of points in $P$,
with ties broken arbitrarily.

For $2 \leq i \leq s$, let $k_i$ be the number of lines containing exactly $i$ points in $P$. A well-known corollary of the Szemer\'edi--Trotter theorem~\cite{ST83} states that
\begin{equation} \label{eq:st}
  b_i := \sum_{j \geq i} k_j =O\left( \frac{n^2}{i^3} + \frac{n}{i} \right).
\end{equation}

Note that the sequence $(b_i)_{i\geq2}$ is non-increasing. 
For a given line $\ell$ with $|\ell \cap P|=i$, let $m=m(\ell)$ be the number of
lines determined by $P$ that are parallel to $\ell$ and incident to at most $i$
points of $P$. Let $a_1,\ldots,a_m \leq i$ be the number of points contained in
these lines. Then the number of trapezoids assigned to $\ell$ is at most 
\begin{equation}
{i \choose 2} \sum_{j=1}^{m(\ell)} {a_j \choose 2} \leq {i \choose 2} \frac{n}{i} \, {i \choose 2} \leq ni^3.
\end{equation}
Therefore,
\begin{equation} 
\label{eq:T}
  T \leq \sum_{i=2}^{s}\sum_{\ell:\,|\ell \cap P|=i} ni^3 = n \sum_{i=2}^{s} i^3 k_i.
\end{equation}

Note that $i^3 -(i-1)^3 \leq 3 i^2$, thus by Abel’s partial summation formula we obtain
\begin{equation}
\begin{aligned}
\sum_{i=2}^{s} i^3 k_i &= \sum_{i=2}^{s} i^3 \left(b_i - b_{i+1}\right) 
= O\left(\sum_{i=2}^{s} i^2 b_i \right)
= O\left(\sum_{i=2}^{s} i^2 \left(\frac{n^2}{i^3} + \frac{n}{i} \right) \right)\\
&= O\left( n^2 \sum_{i=2}^{s} \frac{1}{i} + n \sum_{i=2}^{s} i \right)
= O\left( n^2 \log{s} + n s^2 \right).
\end{aligned}
\end{equation}

Plugging the above estimate into~\eqref{eq:T} yields
\begin{equation}
T \leq  n \sum_{i=2}^{s} i^3 k_i =  O\left( n^3 \log{s} + n^2 s^2 \right).\qedhere
\end{equation}
\end{proof}

\subsection{Proof of Theorem~\ref{thm:distinct-slopes}}
\medskip
\emph{Lower bound.} Let $P\subseteq\mathbb{R}^2$ be an arbitrary set of $n$
points with at most $s$ points collinear,
where $s=O(\sqrt{n})$. We take a random subset $X \subseteq P$ by selecting points
independently with probability $p=k/n$. Note that
$\mathbb{E}(\lvert{X}\rvert)=k$.  

To select a subset with distinct slopes, one needs to avoid two types
of obstacles, collinear triples and trapezoids. Let $L$ and $T$ be the number of collinear triples and trapezoids spanned by $P$.
One obstacle can be eliminated by deleting one point from the subset in the second step. In particular,
it suffices to choose $k$ so that
\begin{equation}
\mathbb{E}(L p^3) \leq k/3 \quad\text{ and }\quad  \mathbb{E}(T p^4) \leq k/3,
\end{equation}
since this implies that the expected number of remaining points after deleting one point from each trapezoid and collinear triple is at least $k-k/3-k/3=k/3$.
Using the upper bounds in Lemma~\ref{low} and  Lemma~\ref{lem:T},
it suffices to choose $k$ so that
\begin{equation}
O(k^2) \leq n/\log{s} \quad\text{ and }\quad  O(k^3) \leq n/\log{s}.
\end{equation}
Setting $k = c(n/\log{s})^{1/3}$ with a sufficiently small constant $c>0$ would satisfy both requirements, proving the lower bound.

\medskip
\emph{Upper bound (i).} Let $P=\{(x,x^2): x\in [n]\}\subseteq \mathbb{R}^2$.
The point set $P$ is of size $n$ and contains no collinear triple.
Let $Q\subseteq P$ be a subset of size $|Q|=2\sqrt{n}+2$. We will show that $Q$ contains a trapezoid. 

A \emph{Sidon set} $S\subseteq [n]$ is a subset of the integers such that
$a+b = c+d$ for $a, b, c, d \in S$ with $a\neq b, c\neq d$ implies $\{a, b\} = \{c, d\}$;
see, \eg,~\cite[Chap.~6]{ES03} or~\cite[Chap.~2]{TV06}. 
Note that the largest size of a Sidon set $S\subseteq[n]$ is at most $2\sqrt{n}+1$,
because $\binom{|S|}{2}\leq 2n$. Let $S=\{x: (x,x^2)\in Q\}$.
Since $|S|=2\sqrt{n}+2$, it is not a Sidon set and thus contains 4 distinct elements $x_1,x_2,x_3,x_4\in S$
such that $x_1+x_2=x_3+x_4$. Then, the 4 points 
$(x_1,x_1^2),  (x_2,x_2^2),  (x_3,x_3^2), (x_4,x_4^2)\in Q $
form a trapezoid. Indeed,
\begin{equation}
\frac{x_2^2-x_1^2}{x_2-x_1}=  
x_1+x_2  = x_3+x_4 = \frac{x_4^2-x_3^2}{x_4-x_3}.
\end{equation}

\medskip
\emph{Upper bound (ii).} The proof is similar to that of Theorem~\ref{law} part~(i).
Consider the 2-dimensional grid $[n]^2$. Let $\mathcal{L}$ be the set of
lines that go through at least two points in $[n]^{2}$. Note that
$\lvert\mathcal{L}\rvert\leq{n^{4}}$ and every line in $\mathcal{L}$ contains at
most $n$ points in $[n]^2$. Moreover, it was shown in~\cite{EGRT92} that every
set of $\Omega(n^{4/5})$ points in $[n]^2$ contains a trapezoid. Let $P$ be a
random subset of $[n]^2$ by retaining each point independently with probability
$n^{\alpha-1}/2$, where $\alpha \in [3/5,1]$. By Chernoff's bound we have that
$|P|=\Theta(n^{1+\alpha})$ with high probability. Furthermore, using the union
bound and Chernoff's bound we have 
\begin{equation}
\begin{aligned}
  & \quad \ \mathbb{P}\left(P\text{ contains no collinear
  $n^{\alpha}$-tuple}\right)=1-\mathbb{P}\left(\exists\,\ell\in\mathcal{L}:\,\lvert{P\cap\ell}\rvert\geq{n^{\alpha}}\right) \\
  &\geq1-\sum_{\ell\in\mathcal{L}}\mathbb{P}\left(\lvert{P\cap\ell}\rvert\geq\left(1+\frac{n^{\alpha}}{2\mathbb{E}(\lvert{P\cap\ell}\rvert)}\right)\mathbb{E}(\lvert{P\cap\ell}\rvert)\right) \geq1-n^{4}\exp(-n^{\alpha}/10),
\end{aligned}
\end{equation}
which tends to $1$ as $n\to\infty$. Namely, there exists a point set $P\subseteq[n]^{2}$
of size $\Theta(n^{1+\alpha})$, so that $P$ contains no collinear $n^{\alpha}$-tuple
and every subset of $P$ of size $\Omega(n^{4/5})$ contains a trapezoid. After scaling we obtain 
\begin{equation} 
h(n,n^{\frac{\alpha}{1+\alpha}}) = O(n^{\frac{4/5}{1+\alpha}}) \quad \text{for}\quad 3/5\leq\alpha\leq1.
\end{equation}
By letting $s=n^{\frac{\alpha}{1+\alpha}}$ we have
\begin{equation}
h(n,s) = O((n/s)^{4/5}) \quad \text{for}\quad n^{3/8}\leq s \leq \sqrt{n}.
\end{equation}

\medskip
Aggregating the bounds in (i) and (ii) yields the bounds in the theorem.
\qed

\section{Further consequences}\label{sec:consequences}

Gowers~\cite{Gow11} introduced the following natural notion. For
$s\in\mathbb{N}$, the \textit{(diagonal) Ramsey number of planar points} $r(s)$
is the smallest integer, such that every set $P\subseteq\mathbb{R}^{2}$ of $r(s)$
points contains either a collinear $s$-tuple or a general position subset of
size $s$. It follows from the definition that $r(s)=s$ when $s\leq3$. With a
simple drawing one can show that $r(4)=5$, but the value of $r(5)$ is already
unknown. For the asymptotic behavior of $r(s)$, Gowers~\cite{Gow11} observed
that 
\begin{equation}
\label{gow}
\Omega(s^{2})=r(s)=O(s^{3}).
\end{equation}
The lower bound construction comes from the grid $[m]^{2}$, where
$m=\lceil{s/2}\rceil-1$. Obviously, $[m]^{2}$ contains at most $s/2$ collinear
points and by the pigeonhole principle at most $s-1$ points in general
position. The upper bound in~\eqref{gow} is derived by a greedy algorithm. Gowers~\cite{Gow11} asked to determine the correct order of $r(s)$ or
at least to show which one of the trivial lower and upper bounds is closer to
the truth. This question has been partially answered by Payne and
Wood~\cite{PW13}, who proved that 
\begin{equation}
\label{pop}
r(s)=O(s^{2}\log{s}),
\end{equation}
and further conjectured that $r(s)$ is quadratic in $s$.
\begin{conjecture}[{Payne-Wood~\cite[Conjecture 4.1]{PW13}}, see also~\cite{Gow13}]
\label{oil}
\begin{equation}
r(s)=\Theta(s^{2}).
\end{equation}
\end{conjecture}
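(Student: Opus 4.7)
The plan is to establish $r(s) = O(s^2)$, since $r(s) = \Omega(s^2)$ is already immediate from the grid $[\lceil s/2\rceil - 1]^2$. Equivalently, I would show that for some absolute constant $C$ and all sufficiently large $s$,
\begin{equation*}
f(Cs^2, s) \;=\; \Omega(s).
\end{equation*}
Substituting $n = Cs^2$ into Proposition~\ref{new} and using $n\log s / s^2 = C\log s$ gives
\begin{equation*}
f(Cs^2, s) \;=\; \Omega\!\left(s\sqrt{\tfrac{\log\log s}{\log s}}\right),
\end{equation*}
which already yields $r(s) = O(s^2 \log s/\log\log s)$. Bridging the remaining $\sqrt{\log s/\log\log s}$ gap is the precise content of the conjecture; the deficit traces back to the fact that the collinear-triple hypergraph of $P$ may have maximum codegree as large as $s$, so the gain $\sqrt{\log \gamma}$ in Lemma~\ref{dry} is only $\sqrt{\log\log s}$ rather than $\sqrt{\log s}$.

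I would approach this via a stability dichotomy on the line-length distribution of $P$. Fix a threshold $s_0 = s^{2/3}$, call a line \emph{heavy} if it contains more than $s_0$ points of $P$, and write $P_H$ for the union of heavy lines intersected with $P$. In the \emph{sparse} case $|P_H| \leq |P|/2$, the residual set $P \setminus P_H$ has at least $Cs^2/2$ points and no line of length greater than $s_0$; rerunning Proposition~\ref{new} with $s_0$ in place of $s$ replaces the bottleneck $\log s$ inside the square root by $\log s_0 = (2/3)\log s$, while the gain term $\log(n\log s_0/s_0^2)$ grows from $\Theta(\log\log s)$ to $\Theta(\log s)$ because the denominator shrinks from $s^2$ to $s^{4/3}$. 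A direct calculation then produces $\Omega(s\sqrt{C})$ general position points in $P \setminus P_H$, which exceeds $s$ for $C$ a sufficiently large absolute constant.

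The main obstacle is the \emph{dense} case $|P_H| > |P|/2$. Trivial double counting gives $|H| \geq |P_H|/s = \Omega(s)$ heavy lines, but selecting one representative per heavy line produces $\Omega(s)$ points that need not be in general position, and iterating the Erd\H{o}s--Szekeres theorem on such a sample loses a square-root factor per round. A more promising route is to refine the dichotomy adaptively, choosing the heavy threshold $s_0$ so that the incidence structure of $H$ is constrained by the Szemer\'edi--Trotter theorem --- as in the proof of Lemma~\ref{lem:T} --- to a near-extremal arrangement resembling a projected grid; one can then extract a general position subset of size $\Omega(s)$ directly from such a rigid arrangement, in the spirit of the Jarn\'ik-style convex-arc selection used for Theorem~\ref{thm:mon-gp-grid}. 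Turning this rigidity intuition into a clean combinatorial statement, rather than an ad hoc case analysis, is in my view where the true difficulty of Conjecture~\ref{oil} lies.
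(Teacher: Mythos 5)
This statement is a \emph{conjecture}: the paper does not prove it, and no proof is currently known. What the paper actually establishes in its vicinity is the lower bound $r(s)=\Omega(s^2)$ from the grid $[\lceil s/2\rceil-1]^2$ and, via Proposition~\ref{new}, the upper bound $r(s)=O(s^2\log s/\log\log s)$ of Corollary~\ref{log}. Your opening computation --- substituting $n=Cs^2$ into Proposition~\ref{new} to get $f(Cs^2,s)=\Omega(s\sqrt{\log\log s/\log s})$ --- exactly reproduces that known partial result, and your diagnosis of where the loss occurs (the codegree bound $\Delta_2\leq s$ forces $\gamma=\Theta(\sqrt{n\log s}/s)$, so $\log\gamma=\Theta(\log\log s)$ when $n=\Theta(s^2)$) is accurate. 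Your sparse case is also a correct calculation: if no line carries more than $s_0=s^{2/3}$ points of the residual set, then $n\log s_0/s_0^2=\Theta(s^{2/3}\log s)$, the numerator gain becomes $\Theta(\log s)$, and Proposition~\ref{new} indeed yields $\Omega(\sqrt{n})=\Omega(s\sqrt{C})$ points in general position.

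The genuine gap is the dense case, and it is not a technical loose end but the entire content of the conjecture. When more than half of $P$ lies on $\Omega(s)$ lines each carrying between $s^{2/3}$ and $s$ points, you must extract $s$ points in general position from a union of long lines, and no known argument does this: picking one point per line gives $\Omega(s)$ points with no control on collinearity, and re-applying the coloring/independence machinery on $P_H$ just reproduces the $\sqrt{\log s/\log\log s}$ loss. Your proposed escape --- that a near-extremal Szemer\'edi--Trotter configuration must ``resemble a projected grid'' from which one could harvest a Jarn\'ik-style convex arc --- relies on a structure or stability theorem for the Szemer\'edi--Trotter inequality that does not exist; characterizing extremal Szemer\'edi--Trotter configurations is itself a well-known open problem. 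Moreover, even granting grid-like structure, the resulting instance is essentially the no-three-in-line problem again, so the argument would be circular in the hardest regime. In short: the proposal correctly recovers the known bounds and correctly locates the obstruction, but it does not prove the statement, which remains open.
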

As a consequence of Proposition~\ref{new}, if a set $P\subseteq\mathbb{R}^2$ of
$n$ points contains no collinear $s$-tuple with $s=O(\sqrt{n})$, then it contains
a general position subset of size $\Omega\left(\sqrt{\frac{n\log\log{s}}{\log{s}}}\right)$.
This implies the following better upper bound on $r(s)$ proved by Hajnal and Szemer\'{e}di~\cite{HS18}. 
\begin{corollary}[{Hajnal--Szemer\'{e}di~\cite[Theorem 3]{HS18}}]
\label{log}
\begin{equation}
r(s)=O\left(\frac{s^{2}\log{s}}{\log\log{s}}\right).
\end{equation}
\end{corollary}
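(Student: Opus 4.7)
The plan is to derive the corollary as a straightforward consequence of Proposition~\ref{new}. Specifically, I will show that for a sufficiently large constant $C$, setting $n := \lceil C s^2 \log s / \log \log s \rceil$ already forces $f(n, s-1) \geq s$. By the definition of $r(s)$, this immediately yields $r(s) \leq n = O(s^2 \log s / \log \log s)$: any $n$-point set in the plane that avoids a collinear $s$-tuple has at most $s-1$ collinear points, and hence contains a general position subset of size at least $f(n,s-1) \geq s$.

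First I would verify that Proposition~\ref{new} is applicable with parameter $s' = s-1$, i.e., that $s-1 \ll \sqrt{n \log n}$. With the chosen $n$, we have
\begin{equation}
\sqrt{n \log n} = \Theta\!\left( s \sqrt{\frac{(\log s)^2}{\log \log s}} \right) = \Theta\!\left( \frac{s \log s}{\sqrt{\log \log s}} \right) \gg s,
\end{equation}
so the hypothesis is satisfied for $s$ large.

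Next I would plug $n$ into the lower bound of Proposition~\ref{new}. Writing $s'$ for $s-1$ (asymptotically interchangeable with $s$), a short calculation gives
\begin{equation}
\frac{n \log s'}{(s')^2} = \Theta\!\left( \frac{(\log s)^2}{\log \log s} \right), \qquad \log \frac{n \log s'}{(s')^2} = \Theta(\log \log s),
\end{equation}
so that
\begin{equation}
\frac{n \log \frac{n \log s'}{(s')^2}}{\log s'} = \Theta\!\left( \frac{C s^2 \log s}{\log \log s} \cdot \frac{\log \log s}{\log s} \right) = \Theta(C s^2).
\end{equation}
Hence $f(n, s-1) = \Omega(\sqrt{C}\, s)$, which exceeds $s$ once $C$ is chosen sufficiently large relative to the hidden constant from Proposition~\ref{new}.

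The argument is essentially a one-line substitution; there is no genuine obstacle beyond the bookkeeping that the logarithmic factors in the numerator and denominator of Proposition~\ref{new} cancel in the correct way to produce an $\Omega(s)$ bound at $n = \Theta(s^2 \log s / \log \log s)$. The only mild care required is to ensure that $\log \frac{n \log s}{s^2}$ is indeed of order $\log \log s$ (not smaller) for this choice of $n$, which follows because $n \log s / s^2 = \Theta((\log s)^2 / \log \log s)$ grows faster than any fixed power of $\log \log s$.
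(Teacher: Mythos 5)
Your proposal is correct and follows essentially the same route as the paper: the paper also derives the corollary by substituting into Proposition~\ref{new} and noting that for $s=O(\sqrt{n})$ the quantity $\log\frac{n\log s}{s^2}$ is $\Omega(\log\log s)$, so the general position subset has size $\Omega\bigl(\sqrt{n\log\log s/\log s}\bigr)$, which exceeds $s$ once $n=\Theta(s^2\log s/\log\log s)$ with a large enough constant. Your explicit verification of the hypothesis $s-1\ll\sqrt{n\log n}$ and of the cancellation of logarithmic factors is exactly the bookkeeping the paper leaves implicit.
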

Our proof of Proposition~\ref{new} can be regarded as an extension of the arguments presented in~\cite{HS18}.
Payne and Wood~\cite{PW13} proposed the following strengthening of Conjecture~\ref{oil}.
\begin{conjecture}[{Payne-Wood~\cite[Conjecture 4.2]{PW13}}]
\label{conj:PaynWood}
  Every set $P\subseteq\mathbb{R}^{2}$ of $n$ points with at most $\sqrt{n}$ points collinear can be colored with $O(\sqrt{n})$ colors
  such that each color class is in general position.
\end{conjecture}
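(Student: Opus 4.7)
The plan is to reduce the conjecture to a question about the chromatic number of an auxiliary 3-uniform hypergraph $\mathcal{H}$ on vertex set $P$ whose edges are the collinear triples in $P$. A proper $k$-coloring of $\mathcal{H}$ is exactly a partition of $P$ into $k$ general position classes, so the conjecture reduces to proving $\chi(\mathcal{H})=O(\sqrt{n})$ whenever no more than $\sqrt{n}$ points of $P$ lie on a common line.

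First I would apply the degree-bound machinery developed in the proof of Proposition~\ref{new}. By Lemma~\ref{low}, the total number of collinear triples in $P$ is $O(n^{2}\log s+ns^{2})=O(n^{2}\log n)$ when $s=\sqrt{n}$; after discarding the heavier half of the points, we get $\Delta(\mathcal{H})=O(n\log n)$. The codegree $\Delta_{2}(\mathcal{H})$ is at most $s-2=O(\sqrt{n})$, since no more than $s$ points are collinear. Plugging these bounds into Lemma~\ref{dry} gives $\chi(\mathcal{H})=O\bigl(\sqrt{n\log n/\log\log n}\bigr)$, reproducing the Hajnal--Szemerédi bound recorded in Corollary~\ref{log} but falling short of the target by a $\sqrt{\log n/\log\log n}$ factor.

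To try to close this gap, I would separate the lines of $P$ by their richness. The \emph{heavy} lines, with $\Theta(\sqrt{n})$ incident points, number at most $O(\sqrt{n})$ by the Szemerédi--Trotter theorem; each contributes at most two points to any general position class, so they can be absorbed in a preprocessing stage using only $O(\sqrt{n})$ of the colors. On the remaining \emph{light} part I would aim to improve Lemma~\ref{low} to $O(n^{2})$ collinear triples, shaving the $\log s$ factor that is the source of the residual loss. Combined with a refined coloring lemma that exploits the fact that the link hypergraph at each vertex is itself a planar incidence hypergraph (hence structurally, not just degree-theoretically, sparse), one could then push a semi-random nibble or entropy-compression version of Lemma~\ref{dry} to yield the required $\chi(\mathcal{H})=O(\sqrt{n})$, after which one iterates on the remaining points until all are colored.

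The main obstacle is that the conjecture implies $f(n,\sqrt{n})=\Omega(\sqrt{n})$: any coloring by $O(\sqrt{n})$ colors forces some class of size $\Omega(\sqrt{n})$, and this lower bound for $f(n,\sqrt{n})$ is itself a central open problem, strictly stronger than Proposition~\ref{new}. Accordingly, any successful implementation of the above plan will almost certainly require a genuinely new geometric input into the structure of collinear-triple-minimizing configurations, beyond what purely hypergraph-theoretic degree and codegree statistics can deliver; in particular the sharper supersaturation step above is where I expect the heaviest new work.
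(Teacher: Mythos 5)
You are attempting to prove a statement that the paper records as an \emph{open conjecture} (Payne--Wood, Conjecture 4.2); the paper offers no proof of it, and neither does your proposal. What the paper actually establishes is the weaker Theorem~\ref{dog}: $O(\sqrt{n}\log{n})$ colors always suffice, and $n-o(n)$ of the points can be colored with $O(\sqrt{n\log{n}/\log\log{n}})$ colors. Your first two paragraphs essentially rederive that partial result (the reduction to $\chi(\mathcal{H})$ for the collinear-triple hypergraph, Lemma~\ref{low} plus the halving trick, and the Cooper--Mubayi bound of Lemma~\ref{dry}), which is the same route the paper takes in Phase~1 of Theorem~\ref{dog} and in Corollary~\ref{log}. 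To your credit, you state plainly that this falls short and that closing the gap requires new ideas; so the proposal is an honest research plan, not a proof, and should not be presented as resolving the conjecture.

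Beyond that, the specific steps you propose for closing the gap do not work as described. The key one is the claim that, after setting aside the $O(\sqrt{n})$ lines with $\Theta(\sqrt{n})$ points, the remaining point set has only $O(n^2)$ collinear triples. This is false: in the $\sqrt{n}\times\sqrt{n}$ grid the $\Theta(n^{2}\log{n})$ collinear triples are spread across all richness scales --- for each dyadic range of $i$ the lines with about $i$ points contribute $\Theta(n^{2})$ triples, by the tightness of the Szemer\'edi--Trotter bound~\eqref{eq:st} --- so deleting the richest lines removes only a constant fraction of the $\log$ factor, and the light part still carries $\Theta(n^{2}\log{n})$ triples. Consequently $\Delta(\mathcal{H})=\Theta(n\log n)$ is unavoidable on this example and Lemma~\ref{dry} cannot give $\chi=O(\sqrt{n})$ by degree and codegree statistics alone (indeed Lemma~\ref{eye} shows those statistics are consistent with the grid). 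The ``absorption'' of the heavy lines with $O(\sqrt{n})$ colors is also unjustified: the $O(n)$ points lying on heavy lines can themselves form a grid-like configuration, so coloring them into general position classes is as hard as the original problem. As you yourself note, the conjecture implies $f(n,\sqrt{n})=\Omega(\sqrt{n})$ by pigeonhole, which would determine $f(n,\sqrt{n})=\Theta(\sqrt{n})$ in view of Theorem~\ref{law}; this is precisely the open problem, so no argument of the above type can succeed without genuinely new geometric input.
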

Conjecture~\ref{conj:PaynWood}, if true, would be sharp since at least $\sqrt{n}/2$ colors are needed to properly color the $\sqrt{n}\times \sqrt{n}$ grid.
Payne and Wood showed that $O(\sqrt{n}(\log{n})^{3/2})$ colors are sufficient to color $P$.
Here we improve this result by a factor of $\sqrt{\log{n}}$. Moreover, note that Theorem~\ref{dog} is a strengthening of Corollary~\ref{log}.
\begin{theorem}
\label{dog}
Every set $P\subseteq\mathbb{R}^{2}$ of $n$ points with at most $\sqrt{n}$
collinear points can be colored with $O(\sqrt{n}\log{n})$ colors such that each
color class is in general position. In particular, one can properly color $n-o(n)$ points of $P$ with $O(\sqrt{n\log{n}/\log\log{n}})$ colors. 
\end{theorem}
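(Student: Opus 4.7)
The plan is to color $P$ greedily: at each step, extract a general position subset $A_t$ from the current uncolored set $P_t\subseteq P$ (starting from $P_0:=P$), give it a fresh color, and set $P_{t+1}:=P_t\setminus A_t$. Since each $P_t$ inherits the hypothesis of at most $s:=\sqrt n$ collinear points, the total number of colors is at most $\int_{0}^{n}dn'/f(n',s)$. The sharpest available lower bound on $f(n',s)$ changes as $n'$ shrinks---the ratio $s^{2}/(n'\log s)$ crosses the thresholds of~\eqref{pay}---so the iteration splits naturally into two phases.

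\emph{Phase~1.} While $|P_t|>2n/\log n$, the logarithm inside Proposition~\ref{new} is positive, so Proposition~\ref{new} yields $f(|P_t|,\sqrt n)=\Omega(\sqrt{|P_t|\log\log n/\log n})$. A routine integration (substituting $u=|P_t|\log n/(2n)$) then bounds the Phase~1 colors by $O(\sqrt{n\log n/\log\log n})$. Since only $O(n/\log n)=o(n)$ points remain uncolored at the end of Phase~1, this already establishes the ``in particular'' assertion.

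\emph{Phase~2.} Once $|P_t|\leq 2n/\log n$, we have $s=\Omega(\sqrt{|P_t|\log|P_t|})$, placing us in the third case of~\eqref{pay}: $f(|P_t|,\sqrt n)=\Omega(|P_t|/\sqrt n)$. Each Phase~2 step therefore multiplies $|P_t|$ by $1-\Omega(1/\sqrt n)$, so Phase~2 terminates after $O(\sqrt n\log n)$ steps. Combining the two phases,
\begin{equation}
O\bigl(\sqrt{n\log n/\log\log n}\bigr)+O(\sqrt n\log n)=O(\sqrt n\log n)
\end{equation}
colors suffice, as claimed.

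The main technical care is needed at the transition $|P_t|\asymp n/\log n$: one must verify that the condition $n'\log s>s^{2}$ (under which Proposition~\ref{new} is informative) and the condition $s^{2}\geq n'\log n'$ (which triggers the third case of~\eqref{pay}) overlap, so no intermediate range of $n'$ is left uncovered. For $s=\sqrt n$ this is a one-line check, and once it is out of the way the two phase integrals are elementary.
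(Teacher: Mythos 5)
Your two-phase architecture is sound, and your Phase~2 is essentially identical to the paper's (greedy extraction via the third case of~\eqref{pay}, giving a $(1-\Omega(1/\sqrt{n}))$ shrinkage per color and hence $O(\sqrt{n}\log n)$ colors). The genuine gap is in Phase~1, at exactly the place you flag but with a different difficulty than the one you propose to check. You assert that Proposition~\ref{new} yields $f(|P_t|,\sqrt{n})=\Omega(\sqrt{|P_t|\log\log n/\log n})$ for all $|P_t|>2n/\log n$ because ``the logarithm inside Proposition~\ref{new} is positive.'' Positivity is not enough: with $s=\sqrt{n}$ and $|P_t|=n'$ the proposition gives $\Omega\bigl(\sqrt{n'\log(n'\log n/(2n))/\log n}\bigr)$, and for $n'=Cn/\log n$ with a constant $C>2$ the inner logarithm is $\log(C/2)=O(1)$, not $\Omega(\log\log n)$; your stated extraction size is then off by a factor of $\sqrt{\log\log n}$, and it degenerates completely as $n'\downarrow 2n/\log n$. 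Worse, the hypothesis of Proposition~\ref{new} is $s\ll\sqrt{n'\log n'}$, which \emph{fails} when $n'=\Theta(n/\log n)$, since there $\sqrt{n'\log n'}=\Theta(\sqrt{n})$. This is not cosmetic: the proof of Proposition~\ref{new} needs its parameter $\gamma=\Theta(\sqrt{n'\log s/s^{2}})$ to exceed $1$ before Lemma~\ref{dry} can be invoked, and $\log\gamma$ sits in a denominator. So the ``routine integration'' is performed with an integrand that is unjustified on the lower part of your Phase~1 range, and the ``one-line check'' that the validity ranges of Proposition~\ref{new} and of case three of~\eqref{pay} overlap does not address this quantitative degradation.

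The gap is repairable. The cleanest fix is to stop Phase~1 at $|P_t|\leq n/\sqrt{\log n}$: throughout that range the inner logarithm is at least $\tfrac12\log\log n-O(1)$ and $\gamma=\Omega((\log n)^{1/4})$, so your claimed extraction size and the resulting $O(\sqrt{n\log n/\log\log n})$ count are correct, and since only $o(n)$ points remain the ``in particular'' clause survives. Bridge the range $n/\log n\lesssim n'\lesssim n/\sqrt{\log n}$ using the second case of~\eqref{pay}, $f(n',\sqrt{n})=\Omega(\sqrt{n'/\log n'})$, which costs $O(\sqrt{n\log n})$ additional colors --- too many for the ``in particular'' clause but absorbed by the final $O(\sqrt{n}\log n)$. (Alternatively one can integrate the exact bound, $\sqrt{2n}\int_{1}^{\log n/2}(w\log w)^{-1/2}\,\mathrm{d}w=O(\sqrt{n\log n/\log\log n})$, but this requires a version of Proposition~\ref{new} with explicit constants valid for $\gamma$ merely bounded away from $1$, which the paper does not supply.) For comparison, the paper's Phase~1 is genuinely different and avoids the issue by design: it performs only $\log\log n-O(1)$ rounds of halving, each round retaining the half of the points lying in the fewest collinear triples, and colors each such layer \emph{all at once} using the Cooper--Mubayi chromatic number bound~\eqref{due}; stopping after $\log\log n-O(1)$ halvings keeps every layer of size at least $4^{C}n/\log n$, hence $\gamma\geq 2^{C}$, which is precisely the safeguard your version is missing.
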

\begin{proof}
Assume that $n$ is sufficiently large and let $s=\lfloor\sqrt{n}\rfloor$. Then
$P\subseteq\mathbb{R}^{2}$ is a set of $n$ points with at most $s$ points
collinear. Our process of coloring $P$ consists of two phases.

\smallskip
\textbf{Phase 1:}\\
Let $k=\lfloor\log\log{n}-2C\rfloor$ with some sufficiently large constant
$C>0$. For each $i\in[k]$, we let $P_{i}$ be a subset of
$P\backslash\left(\bigcup_{j<i}P_{j}\right)$ obtained by selecting half of the remaining points,
namely those that are contained in the least number of collinear triples.
% xxx: have revised the wording, please check and/or further revise if needed.
Note that $\lvert{P_{i}}\rvert\geq4^Cn/\log{n}$ holds for all $i\in[k]$, namely,
$s\leq2^{-C}\sqrt{\lvert{P_{i}}\rvert\log{\lvert{P_{i}}\rvert}}$. Let
$\mathcal{H}_{i}=\mathcal{H}(P_{i})$ be a $3$-uniform hypergraph with the vertex
set $P_{i}$, whose edges are the collinear triples in $P_{i}$. Observe that the
least number of colors needed to color $P_{i}$ with each color class being in
general position is equal to $\chi(\mathcal{H}_{i})$. Recall~\eqref{due}, we
have that 
\begin{equation}
\chi(\mathcal{H}_{i})=O\left(\sqrt{\frac{\lvert{P}_{i}\rvert\log{s}}{\log{\frac{\lvert{P}_{i}\rvert\log{s}}{s^{2}}}}}\right)=O\left(\sqrt{\frac{n\log{s}}{2^{i}\log{\frac{n\log{s}}{2^{i}s^{2}}}}}\right)=O\left(\sqrt{\frac{n\log{n}}{2^{i-1}\log\log{n}}}\right).
\end{equation}
Hence, we can properly color each $P_{i}$ using $\chi(\mathcal{H}_{i})$ new colors for
$1\leq i\leq k$. The total number of colors to properly color $\bigcup_{i=1}^{k}P_{i}$ is at most
\begin{equation}
\sum_{i=1}^{k}O\left(\sqrt{\frac{n\log{n}}{2^{i-1}\log\log{n}}}\right)\leq O\left(\sqrt{\frac{n\log{n}}{\log\log{n}}}\right)\sum_{i=1}^{\log\log{n}}\left(\frac{1}{\sqrt{2}}\right)^{i-1}=O\left(\sqrt{\frac{n\log{n}}{\log\log{n}}}\right).
\end{equation}

\smallskip
\textbf{Phase 2:}\\
Let $P'$ be the set of remaining uncolored points. It holds that
$\lvert{P'}\rvert=O\left(n/\log{n}\right)$. We will color $P'$ as
follows. First, select a largest general position subset $I_{1}$ of $P'$ and let
$P'_{1}=P\backslash{I_{1}}$. For $i>1$, as long as $P'_{i-1}\neq\emptyset$, we
select a largest general position subset $I_{i}$ of $P'_{i-1}$ and let
$P'_{i}=P'_{i-1}\backslash{I_{i}}$. Let $m$ be the stopping time of this
process, i.e., $P'_{m}=\emptyset$, equivalently, 
\begin{equation}
P'=\bigcup_{i=1}^{m}I_{i}.
\end{equation}
By assigning a new color to each $I_{i}$, we finish coloring $P'$ with $m$
colors. It remains to bound the value of $m$. Let $P'_{0}=P'$.
By~\eqref{pay} we have that, for each $i\in[m]$,
\begin{equation}
\lvert{I_{i}}\rvert=\Omega\left(\frac{\lvert{P'_{i-1}}\rvert}{s}\right)\geq\frac{\lvert{P'_{i-1}}\rvert}{D\sqrt{n}},
\end{equation}
for some constant $D>0$. Accordingly, for each $i\in[m]$,
\begin{equation}
\lvert{P'_{i}}\rvert\leq\lvert{P'}\rvert\left(1-\frac{1}{D\sqrt{n}}\right)^{i}.
\end{equation}
When $m=\lceil{D\sqrt{n}\log{n}}\rceil$, it holds that
$\lvert{P'_{m-1}}\rvert\leq2$. Then, we can choose $I_{m}$ to be $P'_{m-1}$ and
the process stops, namely, $m=O(\sqrt{n}\log{n})$. Overall, we have properly colored $P$ using 
\begin{equation}
O(\sqrt{n\log{n}/\log\log{n}})+O(\sqrt{n}\log{n})=O(\sqrt{n}\log{n})
\end{equation}
colors.
\end{proof}

Theorem~\ref{dog} guarantees that most of the points from $P$ can be properly colored with \\ $O(\sqrt{n\log{n}/\log\log{n}})$ colors. This suggests that extending the result to all points may be achievable with further refinements in the proof.

\section*{Acknowledgements}
The first author is supported in part by NSF grants DMS-1764123 and RTG
DMS-1937241, FRG DMS-2152488 and the Arnold O. Beckman Research Award (UIUC
Campus Research Board RB 24012), and the Simons Fellowship. The research of the
second author is supported by the Institute for Basic Science (IBS-R029-C4).

%\section{Concluding remarks} \label{sec:remarks}

\end{document}